\documentclass[12pt]{amsart}
\usepackage{bm,amsmath,amssymb,eucal}

\usepackage{hyperref}

\makeatletter
\@namedef{subjclassname@2020}{%
  \textup{2020} Mathematics Subject Classification}
\makeatother

\newtheorem{Th}{Theorem}[section]
\newtheorem{Prop}[Th]{Proposition}
\newtheorem{Lem}[Th]{Lemma}
\newtheorem{Cor}[Th]{Corollary}
\newtheorem{Fact}[Th]{Fact}

\theoremstyle{definition}

\newtheorem{Pro}[Th]{Problem}

\newtheorem{Ex}[Th]{Example}

\theoremstyle{remark}
\newtheorem{Rem}[Th]{Remark}
\newtheorem{Rems}[Th]{Remarks}

\newcommand\SE{\mathcal E}

\newcommand\SH{\mathcal H}
\newcommand\SI{\mathcal I}

\newcommand\SN{\mathcal N}

\newcommand\SL{\mathcal L}

\newcommand\N{\mathbb N}

\newcommand\K{\mathbb K}

\newcommand\bbm{\boldsymbol m}
\newcommand\bbn{\boldsymbol n}

\newcommand\ep{\varepsilon}
\newcommand\Si{\Sigma}
\newcommand\si{\sigma}
\newcommand\ga{\gamma}
\newcommand\mub{\bar\mu}

\newcommand\Ga{\Gamma}

\newcommand\la{\lambda}
\newcommand\f{\varphi}

\newcommand\ro{\rho}
\newcommand\dl{\delta}
\newcommand\om{\omega}

\newcommand\sm{\smallsetminus}
\newcommand\sbs{\subset}

\newcommand\leqs{\leqslant}
\newcommand\geqs{\geqslant}

\newcommand\oo{\infty}
\newcommand\loo{\ell_{\oo}}

\newcommand\ovl{\overline}

\newcommand\conv{\operatorname{conv}}

\newcommand\nor{\|{\cdot}\|}

\newcommand\mueq{\overset{\mu}{=}}
\newcommand\musbs{\overset{\mu}{\sbs}}

\begin{document}
\title{Control Measures for Bochner~$L_0$-valued Vector Measures}

\author[L.~Drewnowski]{Lech Drewnowski \textdagger } 
\address{Faculty of Mathematics and Computer Science\\
A.~Mickiewicz University\\
Uniw.Pozna\'nskiego 4, 61--614 Pozna\'n, Poland}
\email{drewlech@amu.edu.pl}

\author[A.~Teixeira]{Alexandre R. Teixeira}
\address{Department of Statistics\\
State University of Campinas (UNICAMP)\\
Rua Sérgio Buarque de Holanda, 651| 13083-859, Campinas, SP, Brazil}
\email{riccimath123@gmail.com;a16307@dac.unicamp.br}
\date\today

\footnotetext[1]{\textdagger Lech Drewnowski passed away on November 2023. Both authors contributed equally to the ideas in the text.}
\keywords{Bochner measurable function, space $L_0$, Banach space, F-space, order continuous submeasure, bounded multiplier property, Control Measure}
\subjclass[2020]{11B05, 28A12, 46A45, 46B03, 46B25, 46B45, 47L05}

\begin{abstract}
It is shown that for any finite positive measure $\mu$ defined on a measure space $(S, \Sigma)$, and any Banach or Fr\'echet space $Z$, the control measure Theorem of Talagrand (\cite{T}) is true for the case when the (stochastic) vector measure $\bbm:\SE\to L_0(\mu,Z)$, defined on another measurable space $(E, \SE)$, takes values in $L_{0}(\mu,Z)$, the Bochner space of vector-valued functions associated to $\mu$ and $Z$. As a consequence, we also obtain a Rybakov type result for this control. Finally, we give the relation of this result to bounded multiplier properties (BMP) of $F$-spaces and pose various open problems related to it.
\end{abstract}

\maketitle

\section{Introduction}
Our main aim with this short note is to formulate, in rigorous terms, and prove the result cited in the Abstract, which appears below in a somewhat refined form as Theorem \ref{theoremcontrol} and Theorem \ref{rybakovtheorem}. This work was motivated, at least in part, to contribute further to the relation between functional analytic properties of vector measures and their applications in stochastic integration, represented in the classical literature of stochastic analysis by Schwartz (\cite{schwartz2006semi}) and Metivier \& Pellaumail (\cite{metivierpellaumail}) works on vector measures induced by (formal) semimartingales, as well as by the more recently publications of M. M. Rao on his massive volume on random and vector spaces (see \cite{rao}) and Basse-O'Connor and his coauthors in \cite{basse2014stochastic}, which presents new ideas related to stochastic integration on the real line. These two results, control measure and Rybakov choice of control, are, as we shall comment, related to even more general, though similar in certain aspects, summability properties on $F$-spaces with the BMP (\textit{bounded multiplier property}), hence, naturally, also in vector measures taking values in spaces of this type (see \cite{M-P}, \cite{RNW}, \cite{T}, \cite{KPR}, \cite{LD-4}), and in the integration (in the Bartle-Dunford-Schwartz sense) of scalar functions with respect to such measures (see\cite[p. 120 and ff.]{Rol} and esp.\cite{LDIL-3}). We present our main result in Section \ref{sec:control}, with Section \ref{sec:basic} and Appendix \ref{appendix} serving as a guide to fix some notations and expose some classical results for spaces of measurable functions with values in Banach spaces that we shall need later. With the intentions of giving the readers a better picture of this research area, and, at the same time, inviting them to the work on  still open problems that are worthy of attention, we shall present various open questions throughout the text, and comment on their significance when appropriate. For the general unexplained linear-topological notions the reader is referred to the classical treatise \cite{J}, and for those involving submeasures - the paper of the first author \cite{LD-trs3} and its earlier parts.
\section{Basic notions and notation}\label{sec:basic}

We shall fix some essential notations and concepts to be used in this work for the remainder of its sections. 

Throughout, we assume that:
\begin{itemize}
\item\quad $(S,\Si,\mu)$ is a finite positive measure space.

\item\quad $Z$ is a Banach (or, more generally, Fr\'echet) space, with a norm or an $F$-norm $\nor$ defining its topology $\tau$.
    \end{itemize}

    Any additional requirements on $\mu$ or $Z$ will be imposed explicitly.

In this setting, the topology $\tau_\mu$ of convergence in measure $\mu$ in a  linear space $F$ of functions $f:S\to Z$ is the topology generated by the generic balanced $\tau_\mu$-neighborhoods of zero in $F$:
$$
U_\mu(\dl):=\{f|\exists A:\mu(A)<\dl\ \&\  f(A^c)\sbs U\},
$$
, $A^c$ denoting the complement of $A$ in $S$. Notice that no sort of measurability of functions $f$ is required here. $\tau_\mu$ can be defined, for instance, by the group-seminorm
$$
\|f\|_\mu = \inf\{\ep > 0: \mub(\|f(\cdot)\|\geqs \ep)\leqs\ep\},
$$
where the submeasure $\mub$, the \textit{Jordan extension} of $\mu$, is given by $$\mub(A):=\inf\{\mu(B):A\sbs B\in\Si\}\quad\text{for $A\sbs S$}.
$$
Notation $|\cdot|_\mu$ will be used (though not too often) when $Z=(\K,|\cdot|)$.
To be strict: $\tau_\mu$ is a genuine linear topology, and $\nor_\mu$  a genuine $F$-seminorm when each function $f\in F$ is bounded outside a `$\mu$-small' set $A$, that is,
$$
\forall\dl>0\exists A\in\Si:\mu(A)<\dl\quad  \text{and  $f(A^c)$ is bounded in $Z$}.
$$
It is surely so for $F=\SL_0(\mu,Z)$ below, as follows easily from that, by definition, it is the $\tau_\mu$-closure of simple functions, trivially satisfying the above condition.

\subsection{ More notions, notation and conventions}
 $\SN(\mu)$ denotes the $\si$-ideal of $\mu$-null sets; their subsets are called \textit{$\mu$-negligible}. The symbols $\mueq$ and $\musbs$ will sometimes be used for the equality and containment $\mu$-a.e. (or for $\mu$-a.a. $s\in S$), resp. Any set $A\in\Si$ is tacitly considered with its induced $\si$-algebra $\Si_A=\{B\in\Si:B\sbs A\}$, esp. if the context suggests so. Finally, recall that a function $f:S\to Z$ is said to be \textit{Bochner measurable}, or \textit{$\mu$-measurable} if a sequence of $\Si$-simple $Z$-valued functions converges to $f$ $\mu$-a.e. on $S$ (note that convergence in (sub)measure $\mu$ may be used here as well). It is clear that left compositions with continuous mappings, in particular - linear operators, preserve this type of measurability. We will occasionally write $\SL_0(S,\Si,\mu;Z)$ for the linear space of all such functions $f:S\to Z$, but shorter notation $\SL_0(\mu,Z)$ or $\SL_0(S,Z)$ will also be used. However, what really matters to us, is the quotient linear space $L_0(\mu,Z)=L_0(S,Z)=L_0(S,\Si,\mu;Z)$ of all $\mu$-equivalence classes $[f]=[f]_Z$. If $Z=\K$, the field of scalars of $Z$, we denote this space by $L_0(\mu)$ or $L_0(S)$.
$\tau_\mu$ will stand for the usual (or \textit{strong}) \textit{topology of convergence in (sub)measure} $\mu$, corresponding to the default strong (or ($F$) norm) topology of $Z$; $\nor_\mu$ - for the preferred (e.g., one displayed above), $F$-seminorm or its `quotient' $F$-norm defining that topology in $\SL_0$ or $L_0$, resp. It is well known that $L_0(\mu,Z)$ is an $F$-space.

$\tau^w_\mu$ will denote  the \textit{topology of weak convergence in (sub)measure} $\mu$, the one corresponding, in accordance  with the general concept explained above, to the weak topology $\tau^w =\si(Z,Z^*)$ of $Z$. Like $\tau_\mu$, it is Hausdorff (see Fact \ref{fact:tau*H} below). Obviously, ($f\to |z^*f|_\mu:z^*\in Z^*)$ is a family of $F$-seminorms defining it. It is also clear that  $\tau^w_\mu$ is weaker than $\tau_\mu$. In general, this relation is strict. To see this, consider the sequence $(e_n)$ of standard unit vectors in $c_0$, and the sequence $(e_n 1_{[0,1]})$ in $L_0[0,1]$. A similar example can be provided for any Banach space $Z$ without the Schur property.
\subsection{Preliminary facts}
The two facts right below are well known. For the first, see e.g., \cite{Dunf}, \cite{D-U}.
A proof of the second is adduced for the readers' convenience.
\begin{Fact}\label{fact:b-b}
A function $f:S\to Z$ is Bochner measurable iff there exists $N\in\SN(\mu)$ such that $f|N^c$ is separably valued and Borel measurable. Note: the latter remains valid if $Z$ is replaced by any of its subsets containing $f(N^c)$.
\end{Fact}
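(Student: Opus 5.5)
This is the Pettis-type characterization of Bochner measurability. I will prove the two implications separately, relying only on the facts that $Z$ is a metric (F-normed) space and that $\mu$ is finite.

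($\Rightarrow$) Let $(f_n)$ be $\Si$-simple functions with $f_n\to f$ pointwise outside a set $N\in\SN(\mu)$; enlarging $N$ if needed, take $N\in\Si$. Each $f_n$ takes finitely many values. Hence $f(N^c)$ lies in the closure of the countable set $\bigcup_n f_n(S)$, so $f|N^c$ is separably valued. For Borel measurability it suffices to show that $(f|N^c)^{-1}(G)\in\Si$ for every open $G\sbs Z$. Write $G=\bigcup_k G_k$ with $G_k=\{z:\operatorname{dist}(z,G^c)>1/k\}$, so that $\ovl{G_k}\sbs G$. On $N^c$ I then use the standard identity
$$
f^{-1}(G)\cap N^c=\bigcup_k\bigcup_m\bigcap_{n\geqs m} f_n^{-1}(G_k)\cap N^c .
$$
The inclusion ``$\supseteq$'' holds because the limit point lies in $\ovl{G_k}\sbs G$. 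The inclusion ``$\subseteq$'' holds because if $f(s)\in G$, then $\operatorname{dist}(f(s),G^c)>2/k$ for some $k$, and eventually $f_n(s)\in G_k$. Each $f_n^{-1}(G_k)\in\Si$, so the right-hand side is in $\Si$. The note in the statement is then immediate: replacing $Z$ by a subset $Y\supseteq f(N^c)$ only means considering relatively open sets $G\cap Y$, whose preimages coincide with those of $G$.

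($\Leftarrow$) Suppose $f|N^c$ is Borel measurable with values in a separable subset. Let $\{z_j\}$ be a countable dense subset of $\ovl{f(N^c)}$, which is separable. Define $g_n$ on $N^c$ by $g_n(s)=z_j$, where $j\leqs n$ is the least index minimizing $\|f(s)-z_j\|$, and set $g_n=0$ on $N$. The sets $\{s\in N^c:\ j \text{ is the chosen index}\}$ are finite Boolean combinations of the sets $\{\|f-z_i\|<\|f-z_j\|\}$ and $\{\|f-z_i\|\leqs\|f-z_j\|\}$. These lie in $\Si$ because $s\mapsto\|f(s)-z_i\|$ is Borel on $N^c$. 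Thus $g_n$ is $\Si$-simple. Moreover $\|f(s)-g_n(s)\|=\min_{j\leqs n}\|f(s)-z_j\|\to0$ by density. This argument uses only the F-norm (a metric), so it works equally for Fréchet $Z$. For the note with $Z$ replaced by a subset $Y$: Borel measurability into $Y$ gives measurability of $s\mapsto d(f(s),z_j)$ with $z_j\in\ovl{f(N^c)}$, because $d(\cdot,z_j)$ restricted to $Y$ is continuous. Hence the same construction applies, and the density argument needs only $f(N^c)$.

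The only delicate point is measurability of $N$ and of $N^c$. I will handle it by always replacing a null set by a measurable null superset, which leaves the statement unchanged.
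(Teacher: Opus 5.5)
Your proof is correct and complete, including the Note: traces of open sets handle the forward direction, and continuity of $d(\cdot,z_j)$ on $Y$ handles the converse.

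The paper gives no proof of this Fact. It quotes it as well known and refers to Dunford--Schwartz and Diestel--Uhl, so the meaningful comparison is with the textbook route.

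In Diestel--Uhl, characterizations of this kind are derived from Pettis's measurability theorem. After discarding a null set, one passes to a separable closed subspace. A norming sequence $(z_n^*)$ from Hahn--Banach shows that $s\mapsto\|f(s)-z\|=\sup_n|z_n^*(f(s)-z)|$ is measurable. One then builds countably-valued approximants that are uniformly $\ep$-close to $f$, and finally passes to simple functions. That route proves more, since weak measurability plus essential separability already suffices. But it is a normed-space argument: reaching the Fréchet case allowed by the paper's standing assumptions needs an extra reduction, for example the embedding into a countable product of Banach spaces that the paper uses elsewhere.

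Your argument avoids functionals altogether. Borel measurability gives measurability of $s\mapsto\|f(s)-z_j\|$ directly. The ``nearest among $z_1,\dots,z_n$'' selection then produces $\Si$-simple approximants converging pointwise on $N^c$ in a single step. Since only the translation-invariant metric $\|x-y\|$ is used, it covers F-normed (indeed arbitrary metric) targets at once.

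A minor remark: your argument never uses finiteness of $\mu$, so you need not list it among your assumptions.
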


\begin{Fact}
For every separable subset $E$ of $Z$ there is a sequence $(z^*_n)$ in $Z^*$ that is total on $E$, i.e., whenever $0\ne z\in E$, then $z^*_n(z)\ne 0$ for some $n$.
\end{Fact}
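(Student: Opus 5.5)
The argument is a Hahn--Banach separation applied inside the closed linear span of $E$. First replace $E$ by $X:=\ovl{\operatorname{span}}\,E$, the closed linear span of $E$ in $Z$. The rational (or, for complex scalars, Gaussian-rational) linear combinations of a countable dense subset of $E$ are countable and dense in $X$, so $X$ is separable. Fix a countable dense set $\{x_k\}$ in $X\sm\{0\}$. It suffices to find, for each $k$, finitely or countably many functionals that separate points near $x_k$ from $0$.

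In the Banach case the plan is direct. For each $k$ I will use the Hahn--Banach theorem to pick $z^*_k\in Z^*$ with $\|z^*_k\|=1$ and $z^*_k(x_k)=\|x_k\|$. Now take $0\ne z\in E$. Choose $k$ with $\|z-x_k\|<\|z\|/3$. Then $\|x_k\|>2\|z\|/3$, and
$$
|z^*_k(z)|\geqs |z^*_k(x_k)|-\|z-x_k\|>\tfrac{2}{3}\|z\|-\tfrac13\|z\|>0.
$$
So the sequence $(z^*_k)$ is total on $E$.

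In the Fréchet case I will replace the single norm by an increasing sequence of continuous seminorms $p_1\leqs p_2\leqs\cdots$ defining the topology. Since $Z$ is Hausdorff, each $z\ne0$ satisfies $p_m(z)>0$ for some $m$. For every pair $(k,m)$, Hahn--Banach gives $z^*_{k,m}\in Z^*$ with $|z^*_{k,m}|\leqs p_m$ and $z^*_{k,m}(x_k)=p_m(x_k)$. Given $0\ne z\in E$, I fix $m$ with $p_m(z)>0$ and then pick $k$ with $p_m(z-x_k)<p_m(z)/3$. The same estimate as above then gives $z^*_{k,m}(z)\ne0$. The countable family $(z^*_{k,m})$, enumerated as a single sequence, does the job.

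The only delicate point is the Fréchet case, where an $F$-norm need not be convex and so cannot be fed to Hahn--Banach directly. I interpret ``Fréchet'' as locally convex, which is what makes this step work. For a general non-locally-convex $F$-space the statement can fail, for example in $L_p$ with $0<p<1$, where $Z^*=\{0\}$.
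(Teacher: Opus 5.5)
Your proof is correct and follows the paper's argument: in the Banach case the paper likewise takes a dense sequence, norming functionals with $z^*_n(z_n)=\|z_n\|$, and the same triangle-inequality estimate, using $\|z\|/2$ where you use $\|z\|/3$; your passage to the closed linear span is harmless but unnecessary. For the Fr\'echet case the paper embeds $Z$ into a countable product of Banach spaces and applies the Banach case to the coordinate projections of $E$, which is exactly your seminorm-by-seminorm Hahn--Banach argument in packaged form, and your remark that local convexity is essential is correct.
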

\begin{proof}Assume first that $Z$ is Banach.
Select any sequence $(z_n)$ dense in $E$. By a corollary to the Hahn-Banach theorem, there is a sequence $(z^*_n)$ of norm one elements of $Z^*$ such that $z^*_n(z_n)=\|z_n\|$ for each $n$. Now, take any $0\ne z\in E$, and next choose $n$ so that $\|z-z_n\|<\|z\|/2$. Note that $\|z_n\|\geqs \|z\|-\|z-z_n\|>\|z\|/2$. Thus next, $|z^*_n(z)|\geqs |z^*_n(z_n)|-|z^*_n(z-z_n)|\geqs \|z_n\|-\|z-z_n\|>0$. That's it!. When $Z$ is Fr\'echet, embed it in a countable product of Banach spaces and consider the coordinate projections of $E$.
\end{proof}
\begin{Fact}\label{fact:tau*H}
The topology $\tau^w_\mu$ in $L_0(\mu,Z)$ is Hausdorff.
\end{Fact}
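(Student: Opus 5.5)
Since $\tau^w_\mu$ is a group topology generated by the $F$-seminorms $f\mapsto |z^*f|_\mu$, $z^*\in Z^*$, Hausdorffness means: if $f\in\SL_0(\mu,Z)$ and $z^*f\mueq 0$ for every $z^*\in Z^*$, then $f\mueq 0$, i.e.\ $[f]=0$ in $L_0(\mu,Z)$. The plan is to use Fact \ref{fact:b-b} to reduce to a separably valued function, and then the preceding Fact to replace the uncountable family $Z^*$ by a countable total one. The exceptional null sets can then be collected into a single null set.

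First, by Fact \ref{fact:b-b} choose $N_0\in\SN(\mu)$ such that $f|N_0^c$ is separably valued. Let $E$ be the closure of $f(N_0^c)$; then $E$ is a separable subset of $Z$. By the preceding Fact there is a sequence $(z^*_n)$ in $Z^*$ that is total on $E$. Next, use the hypothesis: for each $n$ we have $|z^*_nf|_\mu=0$. Since $z^*_n f$ is $\mu$-measurable as a left composition with a continuous linear map, this gives a set $N_n\in\SN(\mu)$ with $z^*_n(f(s))=0$ for $s\notin N_n$. Put $N:=N_0\cup\bigcup_n N_n$, which is still $\mu$-null. For $s\notin N$ we have $f(s)\in E$ and $z^*_n(f(s))=0$ for all $n$, so totality forces $f(s)=0$. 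Hence $f\mueq 0$.

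Strictly, the condition $|g|_\mu=0$ only says that $\mub(|g|\geqs\ep)\leqs\ep$ for every $\ep>0$. Passing from this to $g\mueq0$ is routine: $\mub(|g|\geqs \ep)\leqs \dl$ for all $\dl\leqs\ep$, so the set $\{|g|\geqs \ep\}$ is $\mub$-null, and it is contained in a $\mu$-null set of $\Si$. Taking the union over $\ep=1/k$ gives one $\mu$-null set outside which $g=0$; no measurability issue arises, thanks to the Jordan extension.

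The only genuine obstacle is the passage from uncountably many functionals to countably many. Without separability, the null sets $\{z^*f\neq0\}$ need not have a null union. This is exactly why Bochner measurability, via Fact \ref{fact:b-b}, and the total sequence are needed. In the Fréchet case the total sequence is already supplied by the preceding Fact, so the argument is the same.
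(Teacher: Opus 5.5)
Your proof is correct and follows essentially the paper's argument. Fact~\ref{fact:b-b} gives a separable set containing the values of $f$ off a $\mu$-null set, and a sequence total on that set reduces the uncountable family $Z^*$ to countably many functionals whose exceptional null sets combine into one; the paper phrases this last step as a contradiction about the support of $f$, and your remark on passing from $|g|_\mu=0$ to $g\mueq 0$ via the Jordan extension makes explicit a routine point the paper leaves implicit.
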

\begin{proof}
For, let $f\in L_0(\mu,Z)$ be such that $z^*f\mueq 0$ for each $z^*\in Z^*$.
By Fact \ref{fact:b-b} there is a separable set $E\sbs Z$ such that $f(S)\musbs E$. Choose a sequence $(z^*_n)$ in $Z^*$ that is total on $E$. Now, if
the support of $f$ were not $\mu$-negligible, neither would be that of $z^*_n f$ for some $n$, contrary to our assumption.
\end{proof}
\subsection[Closed subspaces]{Closed subspaces of $Z$ and their counterparts in $L_0(\mu,Z)$}
Let $X$ be a closed subspace of $Z$. Of course, $\SL_0(\mu,X)$ is a  subspace of $\SL_0(\mu,Z)$. However, in order to view $L_0(\mu,X)=\{[f]_X:f\in\SL_0(\mu,X)\}$ as a genuine subspace of $L_0(\mu,Z)$, one should redefine the former, with a slight change in notation (although still provisional), as $L_0(\mu,X\sbs Z):=\{[f]_Z:f\in\SL_0(\mu,X)\}$.
Apart from these formalities, the correspondence $[f]_X \leftrightarrow [f]_Z$ is isometric for the $F$-norms $\nor_\mu$. The lemma below clarifies the situation.

\begin{Lem}
 A function $g\in
\SL_0(\mu, Z)$ is $\mu$-equivalent to some function $f\in \SL_0(\mu,X)$ iff $g(S)\musbs X$.
\end{Lem}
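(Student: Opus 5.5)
The two implications are of very different difficulty, so I treat them separately.

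The ``only if'' part is immediate. Suppose $g\mueq f$ with $f\in\SL_0(\mu,X)$. Then $f(S)\sbs X$, and the set $\{s: g(s)\ne f(s)\}$ is $\mu$-negligible. Hence $g(s)\in X$ for $\mu$-a.a. $s$, that is, $g(S)\musbs X$.

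For the ``if'' part, assume $g(S)\musbs X$. Choose $N_1\in\SN(\mu)$ with $g(N_1^c)\sbs X$. Since $g$ is Bochner measurable, Fact \ref{fact:b-b} gives $N_2\in\SN(\mu)$ such that $g|N_2^c$ is separably valued and Borel measurable. Put $N=N_1\cup N_2$ and define $f=g\,1_{N^c}$, so that $f=g$ on $N^c$ and $f=0$ on $N$. Then $f\mueq g$ and $f(S)\sbs X$. It remains to check that $f\in\SL_0(\mu,X)$, i.e. that $f$ is Bochner measurable \emph{as an $X$-valued function}. I would again use Fact \ref{fact:b-b}, now in the version noted there: $Z$ may be replaced by any subset containing the range.

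The key steps for that are as follows. First, $f(N^c)\sbs g(N_2^c)\cap X$, which is separable. Second, $f|N^c$ is Borel measurable into $Z$. Third, Borel measurability into $Z$ passes to Borel measurability into $X$. This is because the Borel sets of $X$ with the relative topology are exactly the traces $B\cap X$ of Borel sets $B$ of $Z$, and $(f|N^c)^{-1}(B\cap X)=(f|N^c)^{-1}(B)$ since the range lies in $X$. Note also that $N^c$ is $\Si$-measurable, so measurability of $f|N^c$ is understood on $\Si_{N^c}$, consistent with the paper's conventions. Fact \ref{fact:b-b} then yields that $f$ is Bochner measurable with values in $X$.

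The only place needing care is this last transfer from $Z$-valued to $X$-valued measurability. The issue is that simple approximants of $g$ in $Z$ need not take values in $X$. Fact \ref{fact:b-b} sidesteps this, which is exactly why the remark ``$Z$ may be replaced by any subset containing $f(N^c)$'' is attached to it.

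If one prefers a direct construction, there is an alternative. Take a countable dense set $\{x_k\}$ in the separable set $f(N^c)\sbs X$. Define $\varphi_n(s)$ to be the first $x_k$, $k\leqs n$, within minimal distance of $f(s)$. These are $\Si$-simple, $X$-valued, and converge pointwise to $f$ on $N^c$. I would mention this as a backup, but I expect the route via Fact \ref{fact:b-b} to suffice.
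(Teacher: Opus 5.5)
Your proof is correct and follows the paper's route. Like the paper, you set $g$ to $0$ on a $\mu$-null set off which $g$ takes values in $X$, then invoke Fact~\ref{fact:b-b} with $Z$ replaced by $X$ to get Bochner measurability as an $X$-valued function. You are somewhat more careful than the paper's one-line argument: you take the exceptional set inside $\SN(\mu)$ (the paper's $\{s:g(s)\notin X\}$ is a priori only $\mu$-negligible, not necessarily in $\Si$), and you spell out both the trace-$\si$-algebra transfer and the trivial \emph{only if} direction.
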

\begin{proof}

 `if': Consider the set $N=\{s: g(s)\notin X\}\in\SN(\mu)$ and define $f=1_N\cdot 0+1_{S\sm N}\cdot g$. Using relevant definitions or Fact \ref{fact:b-b}, one sees that $f$ is as required.
\end{proof}
\begin{Prop}\label{prop:tau*-closed}
$L_0(\mu,X\sbs Z)$ is a $\tau^w_\mu$-closed (and a fortiori $\tau_\mu$-closed) subspace of $L_0(\mu,Z)$.
   \end{Prop}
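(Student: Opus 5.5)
Since $\tau^w_\mu$ is weaker than $\tau_\mu$, every $\tau^w_\mu$-closed set is $\tau_\mu$-closed, so it suffices to prove $\tau^w_\mu$-closedness. That $L_0(\mu,X\sbs Z)$ is a linear subspace is immediate, because $\SL_0(\mu,X)$ is one. The plan is to describe $L_0(\mu,X\sbs Z)$ as an intersection of kernels of $\tau^w_\mu$-continuous maps, after reducing to countably many functionals.

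Fix $[g]$ in the $\tau^w_\mu$-closure of $L_0(\mu,X\sbs Z)$, with $g\in\SL_0(\mu,Z)$. By the preceding Lemma it suffices to show that $g(S)\musbs X$. By Fact~\ref{fact:b-b} there is $N\in\SN(\mu)$ such that $g(N^c)$ is contained in a separable set $E\sbs Z$.

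\textbf{Key step: a countable family of functionals.} I will produce a sequence $(z^*_n)$ in $X^\perp\sbs Z^*$ such that, for $z\in\ovl{E}$, $z\in X$ iff $z^*_n(z)=0$ for all $n$. To build it, pass to the quotient $Z/X$, which is Banach (or Fr\'echet). The set $q(\ovl E)$ is separable, where $q$ is the quotient map. Apply the second Fact (the one on total sequences) in $Z/X$ to get a sequence $(w^*_n)$ in $(Z/X)^*$ that is total on $q(\ovl E)$, and put $z^*_n=w^*_n\circ q$. Then $z^*_n\in X^\perp$, and if $z\in\ovl E\sm X$ we have $q(z)\ne0$, so $z^*_n(z)\ne0$ for some $n$. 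The Fr\'echet case is covered by the Fact as stated, since quotients of Fr\'echet spaces by closed subspaces are Fr\'echet and their duals separate points.

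\textbf{Conclusion via continuity.} For each $z^*\in X^\perp$, the map $[f]\mapsto[z^*f]$ from $(L_0(\mu,Z),\tau^w_\mu)$ into $L_0(\mu)$ is continuous, by the very definition of the seminorms $|z^*f|_\mu$. This map vanishes on $L_0(\mu,X\sbs Z)$: if $f$ is $X$-valued then $z^*f=0$ identically. Continuity then forces $z^*g\mueq0$ for all $z^*\in X^\perp$, and in particular for every $z^*_n$.

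Take the countable union of the null exceptional sets, together with $N$, to get a single null set $M$. For $s\notin M$ we have $g(s)\in E$ and $z^*_n(g(s))=0$ for all $n$, so $g(s)\in X$ by the key step. Hence $g(S)\musbs X$, and the Lemma gives $[g]\in L_0(\mu,X\sbs Z)$.

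The main obstacle is the key step: passing from "annihilated by $X^\perp$ a.e. for each functional" to "a.e. in $X$". An uncountable family of null sets cannot be united, and the reduction to countably many functionals through the quotient and separability is exactly what resolves this.
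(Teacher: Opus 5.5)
Your proof is correct and is essentially the paper's argument. The paper writes $L_0(\mu,X\sbs Z)=Q^{-1}(\{0\})$, where $Q:f\mapsto qf$ is $\tau^w_\mu$-continuous into $L_0(\mu,Z/X)$ via your functionals $\xi q\in X^\perp$, and then cites Fact~\ref{fact:tau*H} for $Z/X$; you instead write out the proof of that Hausdorff property directly, using a sequence total on the separable set $q(\ovl E)$, and this countable reduction is exactly what the proof of Fact~\ref{fact:tau*H} provides.
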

    \begin{proof} Consider the operator $Q:L_0(\mu,Z)\to L_0(\mu,Z/X);f\mapsto qf$. Clearly, it is continuous for either of topologies $\tau_\mu$ and $\tau^w_\mu$ in both the domain and the range. As for the latter, simply notice that for every $\xi\in (Z/X)^*$, setting $x^*:=\xi q\in X^*$, one has $\xi Qf=x^*f$. In consequence, and since $\tau^w_\mu$ is Hausdorff, $L_0(\mu,X\sbs Z)=Q^{-1}(\{0\})$ is as asserted.
   \end{proof}
  
\section[The existence of  control measures]{The existence of  control measures for $L_0(\mu,Z)$-valued vector measures}\label{sec:control}

In what follows, we let $(E,\SE)$ be a measurable space (i.e, as above, $E$ is a set; $\SE$ a $\si$-algebra of its subsets), and $\bbm:\SE\to L_0(\mu,Z)$ be a (countably additive) vector measure. Comparing the contents of  \cite{T} and \cite{LD-4}, both related to controls and boundedness of the aforementioned class of vector measures (the first on the real case $E=\mathbb{R}$), we are interested here in the following question, which was left open by \cite{LD-4} after proving some boundedness results:

    \begin{Pro}\label{pro:control}
    Does $\bbm$ always admit  a \textit{control measure}, that is, a finite positive (or, equivalently in our, context probability) measure $\nu$ on $\SE$ such that $\bbm(A)=0$ whenever $\nu(A)=0$?
    \end{Pro}

    As first strategy of proof, one may ask if it is possible to reduce the problem above to one of the main results of \cite[Th. B]{T}:

   {\rm (T)}\quad   \textit{If $Z=\K$, a control measure for $\bbm$ does exist.}

   In fact, using (T) and some results and ideas from
   \cite[Sec. 10]{LD-trs3}, we may provide an affirmative answer to problem \ref{pro:control}. We begin this task with an auxiliary result from \cite[Th. 10.4]{LD-trs3}; also see \cite[Th. 4.8]{LD-contr}:

   \begin{Prop} \label{prop:countable-uoc}
   Every set $H$ of uniformly o.c. submeasures on $\SE$ has a countable subset $K$ such that the. o.c. submeasures $\eta_H=\sup H$ and $\eta_K=\sup K$ are equivalent, that is, $\eta_H\ll\eta_K\ll\eta_H$ or, which means the same, $\SN(\eta_H)=\SN(\eta_K)$
   \end{Prop}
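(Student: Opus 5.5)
The plan is an exhaustion argument whose only analytic input is the exhaustivity of $\eta_H$. The inequalities $\eta_K\leqs\eta_H$ and $\SN(\eta_H)\sbs\SN(\eta_K)$ are automatic for every $K\sbs H$, so the task is to choose a countable $K$ with $\SN(\eta_K)\sbs\SN(\eta_H)$.

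First I record the properties of $\eta_H$ that will be used. A supremum of submeasures is a submeasure. If $A_n\downarrow\emptyset$, uniform order continuity of $H$ gives $\sup_{\eta\in H}\eta(A_n)\to 0$, so $\eta_H$ is o.c. Every o.c. submeasure $\eta$ has two standard consequences.
\begin{itemize}
\item It is countably subadditive: $\eta(\bigcup_k C_k)\leqs\sum_{k\leqs n}\eta(C_k)+\eta\bigl(\bigcup_k C_k\sm\bigcup_{k\leqs n}C_k\bigr)$, and the last term tends to $0$.
\item It is exhaustive: if $(B_n)$ is disjoint, then $\eta(B_n)\leqs\eta(\bigcup_{k\geqs n}B_k)\to 0$, since $\bigcup_{k\geqs n}B_k\downarrow\emptyset$.
\end{itemize}

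For countable $K\sbs H$ I set
$$\beta(K):=\sup\{\eta_H(A):A\in\SE\cap\SN(\eta_K)\}.$$
This quantity is nonincreasing in $K$, because a larger $K$ has fewer null sets. Moreover $\beta(K)=0$ is exactly the desired inclusion $\SN(\eta_K)\sbs\SN(\eta_H)$.

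I build inductively countable sets $K_0\sbs K_1\sbs\cdots\sbs H$ (with $K_0=\emptyset$ allowed, so that $\SN(\eta_{K_0})=\SE$), together with sets $A_n$ and members $\eta_n\in H$.
\begin{itemize}
\item If $\beta(K_{n-1})=0$, I stop and take $K=K_{n-1}$.
\item Otherwise I pick $A_n\in\SN(\eta_{K_{n-1}})$ with $\eta_H(A_n)>\beta(K_{n-1})/2$.
\item Since $\eta_H$ is a supremum, I pick $\eta_n\in H$ with $\eta_n(A_n)>\beta(K_{n-1})/2$.
\item I set $K_n=K_{n-1}\cup\{\eta_n\}$.
\end{itemize}
For $m>n$ we have $\eta_n\in K_{m-1}$, so $\eta_n(A_m)=0$.

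The key step, and the one I expect to need care, is to disjointify without losing mass. I put $B_n:=A_n\sm\bigcup_{m>n}A_m$. The $B_n$ are pairwise disjoint, because each point of $\bigcup_n A_n$ lies in $B_n$ only for the largest index $n$ with $x\in A_n$, if such an index exists. Countable subadditivity of $\eta_n$ together with $\eta_n(A_m)=0$ for $m>n$ gives
$$\eta_n(B_n)\geqs\eta_n(A_n)-\sum_{m>n}\eta_n(A_m)=\eta_n(A_n).$$
Hence $\eta_H(B_n)\geqs\eta_n(B_n)>\beta(K_{n-1})/2$. Exhaustivity of $\eta_H$ forces $\eta_H(B_n)\to0$, and therefore $\beta(K_{n-1})\to 0$.

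Finally I take $K:=\bigcup_n K_n$, which is countable. Since $\SN(\eta_K)\sbs\SN(\eta_{K_n})$ for every $n$, we get $\beta(K)\leqs\beta(K_n)\to0$, so $\beta(K)=0$. This means $\SN(\eta_K)\sbs\SN(\eta_H)$, and combined with the trivial reverse inclusion gives $\SN(\eta_H)=\SN(\eta_K)$, as required.
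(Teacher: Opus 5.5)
Your proof is correct. The paper itself gives no argument for this proposition. It is quoted from Drewnowski, \cite[Th.~10.4]{LD-trs3} (see also \cite[Th.~4.8]{LD-contr}), and then used as a black box in Theorem~\ref{theoremcontrol} and Theorem~\ref{thm:drewnowski-L0}.

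What you supply is a self-contained exhaustion proof. Its ingredients are:
\begin{itemize}
\item the defect functional $\beta(K)$;
\item a set $A_n\in\SN(\eta_{K_{n-1}})$ carrying more than half the current defect, together with a witness $\eta_n\in H$;
\item the disjointification $B_n=A_n\sm\bigcup_{m>n}A_m$, which costs $\eta_n$ nothing because $\eta_n$ is countably subadditive and vanishes on every later $A_m$.
\end{itemize}
Exhaustivity of $\eta_H$ then forces $\beta(K_n)\to0$. Your route shows exactly what is used: exhaustivity of $\eta_H$ and $\si$-subadditivity of the individual members of $H$. The citation buys brevity and keeps the result inside Drewnowski's framework, from which the paper also takes the finer statement \cite[Th.~10.5]{LD-trs3} needed in the appendix.

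Two minor points should be tidied up.
\begin{itemize}
\item If submeasures may take the value $\infty$, then $\beta(\varnothing)$ can be infinite, and the choice of $A_1$ fails as written. Run the argument with $\eta_H\wedge 1$ instead, which has the same null sets. In the paper's applications everything is finite anyway.
\item If $\ll$ is meant in the $\ep$--$\dl$ sense, you still need the standard passage from $\SN(\eta_K)\sbs\SN(\eta_H)$ to $\eta_H\ll\eta_K$. This is Lemma~\ref{lem:small-small} with $A=\varnothing$, which applies because both $\eta_H$ and $\eta_K$ are o.c.
\end{itemize}
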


   With this preliminary, we can exhibit the following solution to \ref{pro:control}:

   \begin{Th} [Controls for $L^{0}$ Böchner-Valued Vector Measures] \label{theoremcontrol} Let $Z$ be a Banach space and $\bbm:\SE\to L_0(\mu,Z)$ be a countably additive vector measure. Then, $\bbm$ has a control measure.
   \end{Th}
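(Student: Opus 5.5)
We reduce to the scalar case (T) by composing $\bbm$ with functionals, and then use Proposition \ref{prop:countable-uoc} to pass from uncountably many scalar controls to countably many. Concretely, for each $z^*\in Z^*$ with $\|z^*\|\leqs 1$ define
$$
\bbm_{z^*}:\SE\to L_0(\mu),\qquad \bbm_{z^*}(A):=z^*\bbm(A),
$$
that is, we apply the continuous linear operator $f\mapsto z^*f$ from $L_0(\mu,Z)$ to $L_0(\mu)$. Each $\bbm_{z^*}$ is a countably additive $L_0(\mu)$-valued measure, so by (T) it has a control measure. We then attach to each $z^*$ the submeasure
$$
\eta_{z^*}(A):=\sup\{\,|z^*\bbm(B)|_\mu : B\in\SE,\ B\sbs A\,\},
$$
the semivariation of $\bbm_{z^*}$ with respect to the $F$-norm $|\cdot|_\mu$. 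Countable additivity of $\bbm_{z^*}$ makes $\eta_{z^*}$ an o.c. submeasure. The key observation is that the family $H=\{\eta_{z^*}:\|z^*\|\leqs1\}$ is \emph{uniformly} o.c. This holds because $\eta_{z^*}\leqs\eta$ with $\eta(A):=\sup_{B\sbs A}\|\bbm(B)\|_\mu$ (using $|z^*f|\leqs\|f\|$ pointwise, so $|z^*f|_\mu\leqs\|f\|_\mu$), and $\eta$ itself is o.c. since $\bbm$ is countably additive. Uniform order continuity of $H$ thus follows from domination by the single o.c. submeasure $\eta$.

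By Proposition \ref{prop:countable-uoc} there is a countable subfamily $\{\eta_{z_n^*}\}$ with $\SN(\sup_n\eta_{z^*_n})=\SN(\sup H)$. For each $n$ let $\nu_n$ be a control probability for $\bbm_{z^*_n}$ given by (T), and set $\nu:=\sum_n 2^{-n}\nu_n$.

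To verify that $\nu$ controls $\bbm$, let $\nu(A)=0$. Then $\nu_n(B)=0$ for every $B\sbs A$ in $\SE$ and every $n$, so $z^*_n\bbm(B)=0$ and hence $\eta_{z^*_n}(A)=0$ for all $n$. Thus $A\in\SN(\sup_n\eta_{z^*_n})=\SN(\sup H)$, which gives $z^*\bbm(A)\mueq0$ for every $z^*$ in the unit ball, and hence for every $z^*\in Z^*$. Since $\tau^w_\mu$ is Hausdorff (Fact \ref{fact:tau*H}), it follows that $\bbm(A)=0$.

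The main obstacle is justifying the hypotheses of Proposition \ref{prop:countable-uoc}. One must check that each $\eta_{z^*}$ is genuinely order continuous; this follows from countable additivity via the standard fact that the semivariation of a countably additive measure in an $F$-space is o.c. on $\si$-algebras. One must also confirm that uniform order continuity follows from domination by $\eta$. I would verify both directly from the definitions in \cite{LD-trs3}, taking care that $\eta$ is finite-valued and o.c. Should the notion of uniform o.c. there require more than domination, I would instead argue via the uniform countable additivity of $\{\bbm_{z^*}\}$: this uniformity is inherited from $\bbm$ through the uniform estimate $|z^*f|_\mu\leqs\|f\|_\mu$.
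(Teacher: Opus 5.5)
Your proposal is correct and follows the paper's proof essentially step for step. The domination $\ovl{z^*\bbm}\leqs\ovl{\bbm}$ gives uniform order continuity over the unit ball of $Z^*$, and Proposition \ref{prop:countable-uoc} extracts a countable subfamily $(z^*_n)$. The measure $\nu=\sum_n 2^{-n}\nu_n$, built from the scalar controls given by (T), then kills $\sup_n\ovl{z^*_n\bbm}$ and hence $\sup_{\|z^*\|\leqs1}\ovl{z^*\bbm}$, and Fact \ref{fact:tau*H} finishes; your explicit check that $\nu(A)=0$ forces $\ovl{z^*_n\bbm}(A)=0$ (since $\nu_n\leqs 2^n\nu$) only spells out a step the paper leaves implicit.
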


   \begin{proof}
       Let $B_{*}$ the closed unit ball of $Z^{*}$. Notice that, since $|z^*\bbm(A)|_\mu\le\|\bbm(A)\|_\mu$ for all $z^*\in B_{*}$, $A\in\SE$, denoting the submeasure majorants of $z^*\bbm$ and $\bbm$ by $\ovl{z^*\bbm}$ and $\ovl{\bbm}$ respectively, one has that:

       \[\ovl{z^*\bbm}\le\ovl{\bbm},\]

       for each $z^*\in B_{*}$. As $\ovl{\bbm}$ is o.c., it follows by the inequality above that the submeasures in the set $H=\{\ovl{z^*\bbm}:z^*\in B_{*}\}$ are uniformly o.c. Therefore, we may apply Proposition \ref{prop:countable-uoc} to get a sequence $(z^*_n)$ in $B_{*}$ such that the associated submeasures with it subset $K$ of $H$ satisfies the properties required in Proposition \ref{prop:countable-uoc}. Finally, with this extracted sequence construct, as in the proof of Proposition \ref{const.of.measure} right below, a probability measure $\nu$ on  $\SE$ that controls the submeasures obtained by \ref{const.of.measure}. Then whenever $\nu(A)=0$, one has $\eta_K(A)=0$ so that also $\eta_H(A)=0$ for all $A \in \SE$. Consequently, $z^*\bbm(A)\mueq 0$ for all $z^*\in B_{*}$ and hence $\bbm(A)\mueq 0$ for all $A \in \SE$. Thus $\nu$ is a control measure for $\bbm$.
   \end{proof}

   \begin{Rem}
   For the case when $Z$ is a Fr\'echet space, embedding it into a countable product of Banach spaces, and applying the previous case to the composition of $\bbm$ with each coordinate projection, and combining the results in an already familiar manner, we get
   \end{Rem}

   \begin{Th} 
   
   Let $Z$ be a Fréchet space and $\bbm:\SE\to L_0(\mu,Z)$ be a countably additive vector measure. Then, $\bbm$ has a control measure.
   \end{Th}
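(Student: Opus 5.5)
The plan is to reduce to the Banach case, Theorem \ref{theoremcontrol}, as the preceding Remark indicates. Since $Z$ is Fr\'echet, its topology is given by an increasing sequence of continuous seminorms $p_1\leqs p_2\leqs\cdots$. For each $k$ let $Z_k$ be the completion of $Z/\ker p_k$ with the norm induced by $p_k$, and let $\pi_k:Z\to Z_k$ be the canonical continuous linear map. The map $z\mapsto(\pi_k z)_k$ is then a topological isomorphism of $Z$ onto a closed subspace of $\prod_k Z_k$.

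\textbf{Step 1: transfer the measure to each $Z_k$.} Composition with $\pi_k$ preserves Bochner measurability, as noted in Section \ref{sec:basic}. It also respects $\mu$-equivalence, so it induces a linear map $\Pi_k:L_0(\mu,Z)\to L_0(\mu,Z_k)$, $[f]\mapsto[\pi_k f]$. This map is continuous for $\tau_\mu$: if $\mub(p_k(f(\cdot))\geqs\ep)$ is small, then so is $\mub(\|\pi_k f(\cdot)\|\geqs\ep)$. Hence $\bbm_k:=\Pi_k\circ\bbm:\SE\to L_0(\mu,Z_k)$ is a countably additive vector measure into the Bochner space over a Banach space. Theorem \ref{theoremcontrol} then gives a probability measure $\nu_k$ on $\SE$ controlling $\bbm_k$.

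\textbf{Step 2: combine the controls.} Put $\nu=\sum_k 2^{-k}\nu_k$, which is a probability measure on $\SE$. If $\nu(A)=0$, then $\nu_k(A)=0$ for every $k$, so $\bbm_k(A)=0$ for every $k$. Take a representative $f$ of $\bbm(A)$. For each $k$ there is $N_k\in\SN(\mu)$ with $p_k(f(s))=0$ for all $s\notin N_k$. Then $N=\bigcup_k N_k\in\SN(\mu)$, and for $s\notin N$ we have $p_k(f(s))=0$ for all $k$. Since the seminorms $p_k$ separate points, $f(s)=0$ for such $s$, so $\bbm(A)=0$ in $L_0(\mu,Z)$. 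Thus $\nu$ is a control measure for $\bbm$.

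\textbf{Main obstacle.} The only real point is the application of Theorem \ref{theoremcontrol} in Step 1. We must check that $\Pi_k$ is well defined and continuous, so that countable additivity passes to $\bbm_k$. We must also check that $Z_k$ is a genuine Banach space, which is why we pass to the completion. Both are routine, and the countable union of null sets in Step 2 is harmless.
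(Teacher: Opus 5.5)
Your proposal is correct and matches the paper's argument: compose $\bbm$ with the canonical maps into the Banach spaces $Z_k$, apply Theorem \ref{theoremcontrol} to each $\bbm_k$, and combine the controls via $\nu=\sum_k 2^{-k}\nu_k$, using that the $p_k$ separate points. One small fix: the continuity of $\Pi_k$ rests on the continuity of $p_k$ with respect to the $F$-norm of $Z$. This gives, for each $\ep>0$, some $\dl>0$ with $\{s:p_k(f(s))\geqs\ep\}\sbs\{s:\|f(s)\|\geqs\dl\}$; the two sets you compared in Step 1 are identical, so that sentence proves nothing as written.
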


   With this result in mind, we would like to know if a little more can be added to the above in the Banach case, in particular, if a Rybakov-type result for $\bbm$ is valid. In fact, we shall prove that:
   
\begin{Th} [Rybakov Theorem] \label{rybakovtheorem}
    There exists $z^*_0\in B_{*}$ such that $\bbm\ll z^*_0\bbm$.

   In other words, any control measure for $z^*_0\bbm$, existing by (T), is such also for $\bbm$.
\end{Th}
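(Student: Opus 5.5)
We would prove Theorem \ref{rybakovtheorem} by reducing it to the countable family produced in the proof of Theorem \ref{theoremcontrol}, and then combining that family into a single functional by a random (Fubini) choice of coefficients. Here $\bbm\ll z^*_0\bbm$ means: if $A\in\SE$ and $z^*_0\bbm(B)\mueq 0$ for all $B\in\SE$ with $B\sbs A$, then $\bbm(A)\mueq 0$.

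First step: recall the sequence $(z^*_n)\sbs B_*$ from that proof. It satisfies $\SN(\eta_K)=\SN(\eta_H)$, where $K=\{\ovl{z^*_n\bbm}\}$ and $H=\{\ovl{z^*\bbm}:z^*\in B_*\}$. Since $\bbm(A)\mueq 0$ iff $z^*\bbm(A)\mueq 0$ for all $z^*\in B_*$ (Fact \ref{fact:tau*H}), a set $A$ is $\bbm$-null iff $z^*_n\bbm(B)\mueq 0$ for all $n$ and all $B\sbs A$. For a parameter $t=(t_n)\in T:=[0,1]^{\N}$ put
$$
z^*_t:=\sum_n 2^{-n}t_n z^*_n\in B_*,
$$
so that $z^*_t\bbm(B)=\sum_n 2^{-n}t_n\, z^*_n\bbm(B)$, with convergence in $L_0(\mu)$. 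It then suffices to find one $t$ such that, for every $A$, the condition ``$z^*_t\bbm(B)\mueq 0$ for all $B\sbs A$'' forces ``$z^*_n\bbm(B)\mueq 0$ for all $n$ and all $B\sbs A$''.

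Second step: for a single fixed set $B$, show by Fubini that almost every $t$ does not cancel. Give $T$ the product Lebesgue measure $\la$ and choose $\Si$-measurable representatives $f_n$ of $z^*_n\bbm(B)$. Fix $s$ with $f_n(s)\ne0$ for some $n$. Then, freezing all coordinates except $t_n$, the set $\{t:\sum_k 2^{-k}t_kf_k(s)=0\}$ meets each line in at most one point, so it is $\la$-null. Applying Fubini on $T\times S$, for $\la$-a.e. $t$ we get $z^*_t\bbm(B)\ne0$ $\mu$-a.e. on $\bigcup_n\{f_n\ne0\}$. The same then holds simultaneously for any countable family of sets $B$.

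Third step, which we expect to be the main obstacle: pass from countably many sets to all of $\SE$. Let $\nu$ be a control measure for $\bbm$ (Theorem \ref{theoremcontrol}). The map $B\mapsto(z^*_n\bbm(B))_n$ into $L_0(\mu)^{\N}$ is $\nu$-continuous, i.e.\ continuous for the Fr\'echet--Nikodym pseudometric $\nu(B\triangle B')$, because $\ovl{\bbm}$ is o.c.\ and $\bbm\ll\nu$. We would first try to pick a countable family $\mathcal D\sbs\SE$ such that every $B$ is a $\nu$-limit of sets from $\mathcal D$ lying inside $B$; then choose $t$ good for all of $\mathcal D$ and transfer the conclusion by continuity. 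Since the measure algebra of $\nu$ need not be separable, we expect this to fail in general. The fallback is an exhaustion argument. Suppose no $t$ works. Then for each $t$ there is a non-$\bbm$-null set $A_t$ on which $z^*_t\bbm$ vanishes hereditarily. Using the countable chain condition given by $\nu$, take a maximal disjoint family of such ``bad'' sets; it is countable, say $\{A_j\}$. Apply the second step to the countable family $\{A_j\}$ together with suitable subsets, and derive a contradiction via the $\la$-null exceptional set. If the hereditary quantifier ``for all $B\sbs A$'' resists this, we would instead invoke the abstract Rybakov--Walsh type lemma for uniformly o.c.\ submeasures from \cite{LD-trs3}, of which the above is essentially a special case.
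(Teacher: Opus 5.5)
Your Steps 1 and 2 are fine. In Step 2, take $f_n=z_n^*g$ for a single Bochner representative $g$ of $\bbm(B)$, so that $\sum_k2^{-k}t_kf_k(s)$ converges pointwise and is jointly measurable. Step 1 is the same reduction the paper makes via Proposition~\ref{prop:countable-uoc}.

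\textbf{The gap is Step 3, which carries the entire content of the theorem.} Fubini on $T\times S$ gives non-cancellation only for a fixed countable family of sets, with a $\la$-null exceptional set that depends on the family. The relation $\bbm\ll z_t^*\bbm$, however, quantifies over all of $\SE$.

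Your exhaustion sketch does not bridge this. Let $\{A_j\}$ be a maximal disjoint family with $A_j$ bad for $t_j$, and let $t$ avoid the exceptional set for the $A_j$'s and for any countably many subsets you add. Maximality only tells you that the bad set $A_t$ meets some $A_j$ in a non-$\bbm$-null set. For a contradiction you need a member $B$ of your countable family with $B\sbs A_t$ and $\bbm(B)\ne0$, and nothing supplies one:
\begin{itemize}
\item $A_t$ ranges over uncountably many sets depending on $t$.
\item Hereditary vanishing of $z_{t_j}^*\bbm$ on $A_j$ says nothing about $z_t^*\bbm$.
\end{itemize}
Approximating $A_t$ from inside by members of a countable family is exactly the separability of the measure algebra that you already saw can fail. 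So the exhaustion runs into the same obstruction as your first attempt.

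In the Banach-valued case this step is done with Radon--Nikodym densities $h_n$ of $z_n^*\bbm$ with respect to $\nu$. The bad $t$'s are then read off from the jointly measurable set $\{(t,e):\sum_n2^{-n}t_nh_n(e)=0\ne\sup_n|h_n(e)|\}$, and one Fubini argument handles every set at once. That is precisely what is unavailable for $L_0(\mu)$-valued measures. Falling back on the Rybakov-type result of \cite{LD-trs3} does not close the gap either: the paper stresses that it must be adapted to $L_0(\mu)$ "not straightforwardly", and the appendix is that adaptation.

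\textbf{The missing idea is a uniform, quantitative non-cancellation estimate.} The paper obtains it by choosing the coefficients deterministically and adaptively, rather than at random with fixed weights $2^{-n}$:
\begin{itemize}
\item It sets $\ga_{n+1}=\ga_n+b_n\mu_{n+1}$ with $b_n$ outside the countable exceptional set of Lemma~\ref{lem:bad-scalars}. This gives $\SN(\ovl{\ga_n})=\bigcap_{k\le n}\SN(\ovl{\mu_k})$ at every finite stage.
\item It takes $|b_n|$ so small (Lemma~\ref{lem:small-scalars-app}) that the rest of the series stays, uniformly on $\SE$, below the threshold $\dl_m$ attached to $\ga_m$.
\item The threshold comes from the $\ep$--$\dl$ Lemma~\ref{lem:small-small} combined with the sets $A_m$ of \cite[Th.~10.5]{LD-trs3}. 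If $E\cap A_m=\varnothing$ and $q(E)>2^{-m}$, then $|\ga_m(F)|_\mu>\dl_m$ for some $F\sbs E$. The tail is too small to cancel this, so the limit satisfies $z_0^*\bbm(F)\ne0$.
\end{itemize}
With fixed weights and a random $t$, nothing prevents the tail from cancelling the partial sums on uncountably many sets. As it stands, your argument does not prove the theorem.
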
 

To obtain such result, we shall follow the strategy, in the Banach case of \cite{LD-trs3}, which doesn't use any type of Radon-Nikodym arguments, which is needed here as we don't have any density (of RN type) at our disposal. For this, we need some preliminary results that have to be adapted from \cite{LD-trs3}, but not straightforwardly so. To no interrupt the flow of the text, we collect the necessary material in Appendix \ref{appendix} \footnote{Even though the results are valid for general F-spaces, we state and prove them only for $L_{0}(\mu)$.} Also, before presenting the proof of the last theorem, we begin with two easy consequences of (T).

\begin{Prop} \label{const.of.measure}
If $Z$ is separable or, more generally, $Z$ is an $F$-space and there is a total sequence $(z^*_n)\sbs Z^*$, then $\bbm$ has a control measure $\nu$.
\end{Prop}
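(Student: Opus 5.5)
The plan is to reduce the question to the scalar case (T), one functional at a time, and then glue the countably many scalar controls together with a weighted series.

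\emph{Step 1 (separable case).} If $Z$ is separable, the Fact preceding Fact \ref{fact:tau*H}, applied to $E=Z$, yields a sequence $(z^*_n)$ in $Z^*$ total on $Z$. So it suffices to treat an $F$-space $Z$ together with a total sequence $(z^*_n)\sbs Z^*$.

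\emph{Step 2 (scalar controls).} For each $n$ consider $\bbm_n:=z^*_n\bbm:\SE\to L_0(\mu)$, $A\mapsto z^*_n\circ\bbm(A)$. The map $f\mapsto z^*_nf$ is a continuous linear operator $L_0(\mu,Z)\to L_0(\mu)$: if $f_k\to0$ in $\mu$-measure, then $z^*_nf_k\to0$ in measure, by continuity of $z^*_n$ at $0$. Composing a countably additive $\bbm$ with it therefore gives a countably additive $L_0(\mu)$-valued measure. By (T), each $\bbm_n$ has a control probability measure $\nu_n$ on $\SE$, i.e. $\nu_n(A)=0\Rightarrow z^*_n\bbm(A)\mueq0$.

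\emph{Step 3 (gluing).} Put $\nu:=\sum_n 2^{-n}\nu_n$. This is a probability measure on $\SE$, since a countable positive combination of measures with convergent weights is countably additive. If $\nu(A)=0$, then $\nu_n(A)=0$ for every $n$, hence $z^*_n\bbm(A)\mueq0$ for all $n$. Discarding a countable union of null sets, there is $N\in\SN(\mu)$ such that $z^*_n(\bbm(A)(s))=0$ for all $n$ and all $s\notin N$. Totality of $(z^*_n)$ then gives $\bbm(A)(s)=0$ for $s\notin N$, i.e. $\bbm(A)=0$ in $L_0(\mu,Z)$. Thus $\nu$ is a control measure.

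The main point needing care is Step 3's passage from ``$z^*_n f\mueq0$ for each $n$'' to ``$f\mueq0$''. It works because the countability of the sequence lets us take a single exceptional null set. No separability of the range of $f$ is needed beyond the assumed totality on all of $Z$; in the separable case this comes from the cited Fact, exactly as in the proof of Fact \ref{fact:tau*H}.
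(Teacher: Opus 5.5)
Your proof is correct and follows the paper's own route: apply (T) to each scalar measure $z^*_n\bbm$, set $\nu=\sum_n 2^{-n}\nu_n$, and use totality of $(z^*_n)$ off a single common $\mu$-null set, as in the proof of Fact~\ref{fact:tau*H}. You fill in details the paper leaves implicit, namely continuity of $f\mapsto z^*_nf$ and the use of the totality Fact to get a total sequence when $Z$ is a separable Banach or Fr\'echet space.
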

\begin{proof}
By (T), for every $n$ the vector measure $z^*_n \bbm$ admits a control probability measure $\nu_n$. To finish, just set $\nu=\sum_n 2^{-n}\nu_n$ (cf. the proof of Fact \ref{fact:tau*H}).
\end{proof}

\begin{Cor}
If the vector measure $\bbm$ has a separable range, then it admits a control measure.
\end{Cor}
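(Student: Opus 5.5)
The Corollary should follow from Proposition \ref{const.of.measure} once we reduce to a separable target space. Put $R=\{\bbm(A):A\in\SE\}\sbs L_0(\mu,Z)$; by hypothesis it is separable. The plan is to find a separable closed subspace $X\sbs Z$ such that every $\bbm(A)$ lies in $L_0(\mu,X\sbs Z)$. Then $\bbm$ is a countably additive vector measure with values in $L_0(\mu,X)$, because the correspondence $[f]_X\leftrightarrow[f]_Z$ is isometric for $\nor_\mu$. Since $X$ is separable, Proposition \ref{const.of.measure} gives a control measure $\nu$ for $\bbm$ viewed as an $L_0(\mu,X)$-valued measure, and hence for $\bbm$ itself.

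To build $X$, take a countable $\tau_\mu$-dense set $\{\bbm(A_k):k\in\N\}$ in $R$ and choose Bochner measurable representatives $f_k$. By Fact \ref{fact:b-b}, there are $\mu$-null sets $N_k$ such that $f_k(N_k^c)$ is separable. Let $X$ be the closed linear span of $\bigcup_k f_k(N_k^c)$; this is a separable closed subspace of $Z$. The Lemma on $\mu$-equivalence then shows that each $\bbm(A_k)$ belongs to $L_0(\mu,X\sbs Z)$.

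Every $\bbm(A)$ is a $\tau_\mu$-limit of a sequence $\bbm(A_{k_j})$. By Proposition \ref{prop:tau*-closed}, $L_0(\mu,X\sbs Z)$ is $\tau_\mu$-closed, so $\bbm(A)\in L_0(\mu,X\sbs Z)$ for every $A\in\SE$.

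The only point needing care is the density step. A separable subset of a metric space contains a countable dense subset of itself, and $L_0(\mu,Z)$ is metrizable by $\nor_\mu$, so the dense sequence can be taken inside $R$ itself. This is why separability of the range suffices and we need not use a dense set from the closed span. Alternatively, one can avoid the subspace machinery entirely: choose $(z_n^*)$ total on $X$ using the second Fact, apply Proposition \ref{const.of.measure} directly with this sequence, and argue as in Fact \ref{fact:tau*H}, using $f(S)\musbs X$ for each $f=\bbm(A)$.
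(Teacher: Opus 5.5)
Your proof is correct and follows the paper's argument. You take a countable dense subset of the range, use Fact \ref{fact:b-b} to get a separable closed subspace $X$, use the $\tau_\mu$-closedness from Proposition \ref{prop:tau*-closed} to capture the whole range, and then apply Proposition \ref{const.of.measure}. The only difference is cosmetic: the paper restricts to a full-measure set $T$ and works in $L_0(\mu|T,X)$, whereas you use the $\mu$-equivalence Lemma to place the values in $L_0(\mu,X\sbs Z)$ directly.
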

\begin{proof}
By the assumption, there is a sequence $(A_n)\sbs \SE$ such that the sequence $ ( \bbm(A_n))$ is dense in $\bbm(\SE)$. Next, by Fact \ref{fact:b-b}, we can find a closed separable subspace $X$ of $Z$ such that for some $T\in\Si$ with $\mu(S\sm T)=0$ one has $\bbm(A_n)|T \in L_0(\mu|T,X)$ for each $n$. From this, in view of Proposition \ref{prop:tau*-closed}, it follows that $A\mapsto \bbm(A)|T$ is a vector measure from $\SE$ to $L_0(\mu|T,X)$. By the preceding Proposition \ref{const.of.measure}, it admits a control measure which is clearly such for the original vector measure $\bbm$ as well.
\end{proof}

We now return to the Rybakov theorem.

\begin{proof}[Proof of the Rybakov theorem]
For each $z^*\in Z^*$ let
$$
\mu_{z^*}:=z^*\bbm:\SE\to L_0(\mu).
$$
Denote by $\ovl{\mu_{z^*}}$ its semivariation in the sense of \cite[Sec.\ 10]{LD-trs3}. Since
$$
|\mu_{z^*}(A)|_\mu=|z^*\bbm(A)|_\mu\le \|z^*\|\,\|\bbm(A)\|_\mu\qquad(A\in\SE),
$$
we have
$$
\ovl{\mu_{z^*}}\le \|z^*\|\,\ovl{\bbm}.
$$
As $\ovl{\bbm}$ is order continuous, the family
$$
\SH:=\{\ovl{\mu_{z^*}}:z^*\in B_*\}
$$
is uniformly order continuous. Moreover, every scalar $L_0(\mu)$-valued countably additive vector measure has bounded range by \cite{T} (see also \cite{KPR}). We may therefore apply Theorem \ref{thm:drewnowski-L0} below to the family $(\mu_{z^*})_{z^*\in B_*}$. Thus there are a sequence $(z_n^*)\sbs B_*$ and scalars $(c_n)\in \ell_1$ such that the series
$$
\lambda:=\sum_{n=1}^{\infty} c_n\mu_{z_n^*}
$$
converges uniformly on $\SE$, defines a countably additive measure $\lambda:\SE\to L_0(\mu)$, and satisfies
$$
\ovl{\lambda}(A)=0\iff \sup_{z^*\in B_*}\ovl{\mu_{z^*}}(A)=0\qquad(A\in\SE).
$$
Since $(c_n)\in\ell_1$ and $\|z_n^*\|\le 1$ for all $n$, the series $\sum_n c_n z_n^*$ converges absolutely in $Z^*$. Let
$$
z_0^*:=\sum_{n=1}^{\infty} c_n z_n^*\in Z^*.
$$
For each $A\in\SE$ we then have
$$
\lambda(A)=\sum_{n=1}^{\infty} c_n z_n^*\bbm(A)=z_0^*\bbm(A),
$$
so that $\lambda=z_0^*\bbm$.

Now let $A\in\SE$ be such that $z_0^*\bbm(A)=0$. Then $\ovl{\lambda}(A)=0$, hence
$$
\sup_{z^*\in B_*}\ovl{\mu_{z^*}}(A)=0.
$$
Therefore $\ovl{\mu_{z^*}}(A)=0$ for every $z^*\in B_*$. In particular,
$$
z^*\bbm(A)=0\qquad(z^*\in B_*).
$$
By homogeneity, the same equality holds for every $z^*\in Z^*$. Thus $z^*\bbm(A)\mueq 0$ for all $z^*\in Z^*$, and Fact \ref{fact:tau*H} implies that $\bbm(A)=0$ in $L_0(\mu,Z)$. We have proved that $\bbm\ll z_0^*\bbm$.

If $z_0^*=0$, then $\bbm=0$ and the conclusion is trivial. Otherwise, replacing $z_0^*$ by $z_0^*/\max\{1,\|z_0^*\|\}$, we obtain an equivalent functional still denoted by $z_0^*$ and belonging to $B_*$.
\end{proof}

With the proof now in place, we make some further methodological comments regarding the proof of the control result in the context of (T).

\begin{Rem}
Talagrand, to get (T), used first the fact that $\conv\bbm(\SE)$ is bounded in $L_0(\mu)$ so that the integration operator $I_{\bbm}:\f\mapsto\int_E\f\,d\bbm$ from the Banach space $L_\oo(E,\SE)$ of bounded scalar Borel measurable functions to $L_0(\mu)$ can be defined and is continuous. Next, appealing to a result of B. Maurey \cite{M}, he concluded that   $\bbm= \psi\cdot \bbn$, where $\bbn$ is a vector measure from $\SE$ to the Hilbert space $L_2(\mu)$, and $0\le \psi\in L_0(\mu)$. Hence from a classical 1955 result of Bartle, Dunford and Schwartz it followed that $\bbn$ admits a control measure $\nu$, which is obviously such for $\bbm$, too.
Now, a question arises, can we possibly repeat this reasoning, when $Z$ is an arbitrary Banach space? The first of its steps \underline{can} be repeated, because $\conv\bbm(\SE)$ is still bounded in $L_0(\mu,Z)$ (cf. the next section) and, therefore, one may rightly consider the integration operator $I_{\bbm}: L_\oo(E,\SE)\to L_0(\mu,Z)$. But what about a suitable for us more general version of  Maurey's result? This point is as yet unclear to the authors. A likely substitute for $L_2(\mu)$ seems to be the \textit{weak Hilbert space}:
   $$
   \text{\rm w-}L_2(\mu,Z) :=\{f\in L_0(\mu,Z) | \forall z^*\in Z^*:z^*f\in L_2(\mu)\},
   $$
   with the norm
   $$
   \text{\rm w-}\|f\|_2:=\sup_{\|z^*\|\le 1} \|z^*f\|_2.
   $$
   Further, it is reasonable to expect a factorization of $\bbm$ of the form
   $\bbm=\psi\cdot\bbn$ for some $0\le\psi\in L_0(\mu)$ and a vector measure
   $\bbn:\SE\to \text{\rm w-}L_2(\mu,Z)$.
\end{Rem}
\begin{Rem}
If, in Problem \ref{pro:control}, one allows $\mu$ to be an o.c. submeasure, and $Z$ to be an $F$-space, then  necessary requirements for an affirmative answer are that $\mu$ is equivalent to a finite positive measure (i.e., both have the same null sets), and that every countably additive vector measure $m:\SE \to Z$ admits a control measure. To see the first, consider $\bbm:\Si\to L_0(\mu,Z);A\mapsto z1_A$ for a fixed $0\ne z\in Z$. For the second, use $\bbm:\SE\to L_0(\mu,Z);A\mapsto m(A)1_S$. As is by now well known to everyone thanks to M. Talangrand's work \cite{T2}, the first requirement is \underline{not always} satisfied.
\end{Rem}
\subsection{F-spaces with BMP and  convexly bounded vector measures}\label{sec:BMP}
   For a more detailed information related to this section, see \cite[Sec. 2.2]{LDIL-3}. Recall that a topological linear space $F$ is said to have the \textit{Bounded Multiplier Property} (BMP) if every subseries convergent series $\sum_n z_n$ in $F$ is \textit{bounded multiplier convergent}. that is, the series $\sum_n a_n z_n$ converges for each sequence $(a_n)\in\loo$. It was shown in \cite{M-P} and \cite{RNW}, resp., that
   \begin{Fact}
    For $\mu$ a finite positive measure, the $F$-spaces $L_0(\mu)$ and $L_0(\mu,Z)$ have the {\rm BMP}.
     \end{Fact}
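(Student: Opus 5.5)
The plan is to reduce the statement to known BMP results for scalar series, using the structure $L_0(\mu,Z)$ as a space of Bochner measurable functions. The Fact has two parts: BMP for $L_0(\mu)$ (Maurey--Pisier) and BMP for $L_0(\mu,Z)$ (Rolewicz--Ryll-Nardzewski--Woyczy\'nski type). I would prove the scalar case first and then transfer it to the vector case.

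\emph{Scalar case.} Let $\sum_n f_n$ be subseries convergent in $L_0(\mu)$. Then the set $\{\sum_{n\in\sigma} f_n:\sigma\sbs\N \text{ finite}\}$ is bounded in $L_0(\mu)$. By the Orlicz--Pettis/Kalton principle in $L_0$, subseries convergence in measure also gives, for a.e.\ $s$, unconditional convergence of $\sum_n f_n(s)$ along a fixed subsequence. The cleaner route, however, is Maurey's factorization (as in the Remark on Talagrand's argument above). The map $\ell_\oo\ni a\mapsto\sum a_nf_n$, first on finitely supported $a$, is bounded into $L_0$. Factoring through a change of density, there is $0<\psi$ with $f_n/\psi$ forming a weakly unconditionally Cauchy series in $L_2(\mu)$. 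Since $L_2$ contains no $c_0$, that series is unconditionally convergent in $L_2$, and hence bounded multiplier convergent there. Multiplying back by $\psi$ gives bounded multiplier convergence in measure.

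\emph{Vector case.} Given a subseries convergent $\sum_n f_n$ in $L_0(\mu,Z)$, I would use Fact~\ref{fact:b-b} to reduce to $Z$ separable, throwing away a null set. Next I would pass to $\mu$-a.e.\ statements through randomization: subseries convergence means that the Rademacher sums $\sum_n r_n(t) f_n$ converge in $L_0(\mu\otimes dt, Z)$. By Kahane's contraction principle in $L_0$ (the Kahane--Kwapie\'n inequalities hold for $F$-norms of convergence in measure), bounded multipliers $a\in\loo$ with $|a_n|\le1$ are dominated in probability by the Rademacher sums. This gives that $\sum a_nf_n$ is Cauchy in measure.

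The main obstacle is this contraction step in the non-locally-convex space $L_0$, where Hahn--Banach is unavailable. I would handle it with Fubini: first work pointwise in $s$, using the Itô--Nisio theorem to get a.e.\ convergence of random series in $Z$, then apply the contraction principle in the Banach space $Z$ for fixed $s$, and finally integrate back in $s$.
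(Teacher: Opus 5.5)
The paper does not prove this Fact; it cites Maurey--Pisier for $L_0(\mu)$ and Ryll-Nardzewski--Woyczy\'nski for $L_0(\mu,Z)$. Your self-contained argument has a genuine gap in each half.

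\textbf{Scalar case.} You assume that $a\mapsto\sum_na_nf_n$ sends the finitely supported part of the unit ball of $\loo$ to a bounded subset of $L_0(\mu)$. That is where the whole difficulty lies.

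It does not follow from boundedness of the subset sums by convexity. For $a_n\in[0,1]$, $\sum_na_nf_n=\int_0^1\sum_{n:\,a_n>u}f_n\,du$ is an average of subset sums, and averages of bounded sets in $L_0$ can be unbounded. For example, the functions $n^2 1_{[(k-1)/n,\,k/n)}$ form a bounded set in $L_0[0,1]$, while their averages over $k$ are $n1_{[0,1)}$.

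Moreover, if you demand this boundedness for all subseries convergent series, it is equivalent to the BMP itself. Suppose $|\sum_{n\in I_k}a_nf_n|_\mu\ge\ep$ on infinitely many disjoint blocks $I_k$. Choose blocks $I_{k_j}$ on which every subset sum has $F$-norm at most $2^{-j}/j$. The series with terms $jf_n$ for $n\in I_{k_j}$ (and zeros elsewhere) is then subseries convergent, yet its bounded-multiplier sums $j\sum_{n\in I_{k_j}}a_nf_n$ are unbounded. So Maurey's factorization enters only after the statement has in effect been assumed.

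Your opening remark about a.e.\ unconditional convergence of $\sum_nf_n(s)$ should also be dropped. For scalars, unconditional convergence means absolute convergence, and $f_n=r_n/n$ is unconditionally convergent in $L_2\sbs L_0$ with $\sum_n|f_n(s)|=\infty$ a.e.

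\textbf{Vector case.} Randomization, L\'evy/It\^o--Nisio fibrewise in $s$, the contraction principle in $Z$ and Fubini do show that $\sum_na_nr_n(t)f_n(s)$ converges in $L_0(\mu\otimes dt,Z)$. But this concerns almost every sign pattern. It says nothing about the single pattern $r_n\equiv1$, which is exactly $\sum_na_nf_n$. So the sentence ``This gives that $\sum a_nf_n$ is Cauchy in measure'' is a \emph{non sequitur}; a de-randomization step is missing.

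One way to supply it:
\begin{itemize}
\item For real $a_n\in[-1,1]$, let $(\theta_n)$ be independent signs on an auxiliary probability space with $\mathbb{E}\theta_n=a_n$.
\item For every outcome $\om$, the series $\sum_n\theta_n(\om)f_n$ converges in $L_0(\mu,Z)$ by unconditional convergence.
\item The coefficients $\theta_n-a_n$ are independent, centred and bounded by $2$. Symmetrization and the contraction principle give $\mathbb{E}\|\sum_{n=m}^N(\theta_n-a_n)x_n\|\le4\,\mathbb{E}\|\sum_{n=m}^Nr_nx_n\|$ in $Z$.
\item Your own argument shows $\sum_nr_nf_n(s)$ converges a.s.\ for $\mu$-a.a.\ $s$, hence in $L_1$ by Kahane's theorem. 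Therefore $\sum_n(\theta_n-a_n)f_n(s)$ converges in $L_1$, and a.s.\ by L\'evy, for $\mu$-a.a.\ $s$.
\item Fubini then gives an $\om$ for which $\sum_n(\theta_n(\om)-a_n)f_n$ converges $\mu$-a.e. Subtracting this from $\sum_n\theta_n(\om)f_n$ shows that $\sum_na_nf_n$ converges in measure.
\item Complex multipliers are handled by splitting into real and imaginary parts.
\end{itemize}
Taking $Z=\K$, this also repairs the scalar case, with no factorization theorem needed.
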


     In the context of this fact, the following question arises: 
     
     \begin{Pro}\label{pro:BMP}
     Is the above still true when $\mu$ is an o.c. submeasure and/or $Z$ is an $F$-space with the {\rm BMP}?
     \end{Pro}
     The following well known characterization may be useful in attacking the problem, by focusing attention on condition (bmp) stated below instead of {\rm BMP}. Anyway, this approach  works almost trivially when $L_0(\mu,Z)$ is replaced by the much simpler complete metrizable topological vector group $F(S,Z)$ of all functions from $S$ to $Z$ with the topology of uniform convergence. Alas, in the case of interest to us, things don't go that smoothly.
   \begin{Fact}
   An $F$-space $F$ has the {\rm BMP} iff

   {\rm (bmp)}\quad for each $0$-neighborhood $V$ in $F$ there is a $0$-neighborhood $U$ in $F$ such that whenever $(z_i: i\in I)$ is a finite family of elements of $F$ such that $\sum_{i\in J}z_i\in U$ for all $J\sbs I$, then $\sum_{i\in I} a_i z_i\in V$ when all $|a_i|\le 1$.
   \end{Fact}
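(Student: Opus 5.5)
The statement is the classical characterization of the BMP, so I would prove both directions directly from the definitions and not need any of the measure-theoretic machinery developed above. Fix an $F$-norm $\nor$ on $F$. Throughout I will use the standard fact that a series $\sum_n z_n$ in an $F$-space is subseries convergent iff for every $0$-neighborhood $U$ there is $N$ with $\sum_{i\in J}z_i\in U$ for every finite $J\sbs\{N,N+1,\dots\}$. Likewise, it is bounded multiplier convergent iff for each $V$ there is $N$ with $\sum_{i\in J}a_iz_i\in V$ for all finite $J\sbs[N,\oo)$ and all $|a_i|\le 1$. Here I assume that bounded multiplier convergence is uniformly Cauchy over the unit ball of $\loo$; if only pointwise convergence is required, that case reduces to this one via the diagonal argument below.

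For (bmp) $\Rightarrow$ BMP, I would take a subseries convergent $\sum_n z_n$ and $(a_n)\in\loo$, normalized so that $\|(a_n)\|_\oo\le 1$. Given $V$, I choose $U$ by (bmp) and then $N$ such that all finite block sums beyond $N$ lie in $U$. For any finite $I\sbs[N,\oo)$, every subsum over $J\sbs I$ lies in $U$, so $\sum_{i\in I}a_iz_i\in V$. Hence the partial sums of $\sum a_nz_n$ are Cauchy, and completeness of $F$ gives convergence.

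For BMP $\Rightarrow$ (bmp), I argue by contradiction. Suppose that for some $V$ no $U$ works. Taking $U_k=\{\|z\|<2^{-k}\}$, I get for each $k$ a finite family $(z^{(k)}_i)_{i\in I_k}$ all of whose subsums have norm $<2^{-k}$, together with scalars $|a^{(k)}_i|\le1$ such that $\sum_i a^{(k)}_iz^{(k)}_i\notin V$. I then concatenate these blocks into a single sequence $(z_n)$. Because the block subsums satisfy $\sum_k 2^{-k}<\oo$, every subseries converges. Indeed, a finite subsum meeting blocks $k\ge K$ has norm at most $\sum_{k\ge K}2^{-k}$, where I use the subadditivity of the $F$-norm across blocks; and within each block only subsums occur, each bounded by $2^{-k}$. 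So the series is subseries convergent. The multiplier sequence $(a_n)$ obtained by concatenating the $a^{(k)}_i$ lies in the unit ball of $\loo$. By BMP, $\sum a_nz_n$ converges, so its Cauchy block sums must eventually lie in $V$. But the $k$-th block sum is exactly $\sum_i a^{(k)}_iz^{(k)}_i\notin V$ for every $k$, which is a contradiction.

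The only delicate point is the bookkeeping for subseries convergence within partial blocks. A tail subsum may cut a block, but it is still a subsum of that block, so it is bounded by $2^{-k}$, and the estimate goes through. Apart from this, I expect no real obstacle; the argument is routine.
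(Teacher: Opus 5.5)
The paper does not prove this Fact. It is quoted as a ``well known characterization'' and used only heuristically, so there is no argument of the authors to compare with. Your proof is the standard gliding-hump argument, and it is correct in both directions.

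Two minor remarks.

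First, your hedge about whether bounded multiplier convergence means uniform Cauchyness over the unit ball of $\loo$ is unnecessary. In (bmp)$\Rightarrow$BMP you establish the uniform Cauchy condition, which in particular gives convergence of $\sum_n a_nz_n$ for each fixed $(a_n)\in\loo$. In BMP$\Rightarrow$(bmp) you only use convergence of the single series $\sum_n a_nz_n$ built from the concatenated multipliers. So your argument works verbatim with the paper's pointwise definition of BMP.

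Second, your block estimate for the concatenated series only yields the Cauchy condition for every subseries. It is completeness of $F$ that turns this into convergence, and you should say so there as you did in the other direction. Completeness (used in both directions) and metrizability (needed to take the countable base $U_k=\{\|z\|<2^{-k}\}$) are exactly where the $F$-space hypothesis enters.

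The ``standard fact'' you cite is that subseries convergence yields, for each $U$, an $N$ with all finite subsums beyond $N$ in $U$. It follows from the same gliding hump: successive disjoint finite blocks with sums outside $U$ would produce a non-Cauchy subseries. With that observation your proof is self-contained.
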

   \begin{Rem}
   A closer look at condition (bmp) reveals that it is necessary and sufficient for $Z$ to have the following property:
   For every family $(z_i:i\in I)$ in $Z$, if it satisfies the Cauchy condition for summability, also the family $(a_i z_i:i\in I)$, where $|a_i|\le 1$ for all $i$ satisfies that condition.
   \end{Rem}
   Let's see now, what condition (bmp) gives when applied to vector measures.
    Let $(E,\SE)$ be a measurable space, and let $\bbm:\SE\to F$ be a bounded finitely additive  vector measure. Let $V$ and $U$ be as in (bmp). Then for some $\la>0$ one has $\la\bbm(\SE)\sbs U$. In consequence, by (bmp), whenever $(A_i: i\in I)$ is a finite disjoint family in $\SE$ and $|a_i|\le 1$ ($i\in I$), then $\la\sum_{i\in I}a_i\bbm(A_i)\in V$. In other terms,
   the set $\{\int_E f\,d\bbm \mid f:E\to F \  \text{is $\SE$-simple and $\|f\|_\oo\le 1$}\}$ is bounded in $F$. Thus, in the terminology of Rolewicz \cite[p. 135]{Rol}, $\bbm$ is $L_\oo$-bounded.
      Continuing, consider any finite sequence $A_1,\dots, A_n$ in $\SE$, and nonnegative reals $\alpha_1,\dots,\alpha_n$ with $\sum_i \alpha_i=1$. For $k\le n$ denote: $b_k=\sum_{i\le k}\alpha_i$ and $B_k=A_k\sm (A_1\cup\dots\cup A_{k-1})$. Notice that $B_k$'s are pairwise disjoint and that $A_k=B_1\cup\dots\cup B_k$ for each $k$. In addition, $\sum_i \alpha_i\bbm(A_i)=\sum_k b_k \bbm(B_k)$. It follows that $\conv\bbm(\SE)$ is bounded, and thus $\bbm$ is \textit{convexly bounded} in the alternative terminology of \cite{LDIL-3}. Finishing this part let us stress that the most interesting case of $\bbm$ for us is that of  countably additive vector measures $\bbm:\SE\to L_0(\mu,Z)$
      which, generalizing results of \cite{T} and \cite{KPR}, were shown in \cite{LD-4} to be bounded. The story is not closed yet, for here pops up another nagging question:
      \begin{Pro}
     Is it true that when $\mu$ is a finite positive measure or an o.c. submeasure and $Z$ is an $F$-space such that every countably additive vector measure with values in $Z$ has a bounded range, so is in the case of like measures with values in $L_0(\mu,Z)$?
     \end{Pro}

\subsection*{Acknowledgment}
We are grateful to Iwo Labuda for his interest in this work and many useful suggestions. The second author is especially grateful to Prof. Labuda for his advice regarding the continuity of this work after the passing of Lech Drewnowski.

\appendix 
\section{\\Some Elements of Submeasure Control for Non-Locally Convex Spaces} \label{appendix}

For the convenience of the reader, and in order to keep the proof of the Rybakov theorem completely self-contained, we record here the precise form of the combination argument used above, and based on some elements of \cite{LD-trs3}. Throughout this appendix, $(\mu_t:t\in T)$ denotes a family of countably additive measures from $\SE$ to $L_0(\mu)$, and $\ovl{\mu_t}$ denotes the (classical) semivariation considered in \cite[Sec.\ 10]{LD-trs3}.

We shall use repeatedly the elementary fact, proved in \cite[Sec.\ 10]{LD-trs3}, that for every finitely additive $L_0(\mu)$-valued measure $\nu$ one has
$$
\SN(\ovl\nu)=\{A\in\SE: \nu(B)=0\text{ for every }B\in\SE,\ B\sbs A\}.
$$
In particular,
$$
\ovl\nu(A)=0\iff \nu(B)=0\quad(B\in\SE,\ B\sbs A).
$$

\begin{Lem}\label{lem:small-scalars-app}
Let $X$ be a topological vector space and let $B\sbs X$ be bounded. Then, for every $0$-neighborhood $V$ in $X$, there exists $\ro>0$ such that
$$
|a|<\ro\Longrightarrow ax\in V\qquad(x\in B).
$$
\end{Lem}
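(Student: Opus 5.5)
The plan is to use the standard characterization of boundedness in a topological vector space and the balancedness of a suitable neighborhood. First, given the $0$-neighborhood $V$, choose a balanced $0$-neighborhood $W\sbs V$; every topological vector space has a base of balanced $0$-neighborhoods, since scalar multiplication is continuous at $(0,0)$. Next, use boundedness of $B$: by definition, $B$ is absorbed by $W$, so there is $t>0$ with $B\sbs sW$ for all $|s|\geqs t$. In particular $B\sbs tW$.

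Now set $\ro:=1/t$. Take $x\in B$ and $|a|<\ro$. Write $x=tw$ with $w\in W$, so $ax=(at)w$ with $|at|<1$. Since $W$ is balanced, $(at)w\in W\sbs V$, and this is the required conclusion.

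Nothing here is a real obstacle. The only point needing care is which definition of "bounded" is in force. If boundedness is defined instead as "$\la_n x_n\to 0$ whenever $\la_n\to 0$ and $(x_n)\sbs B$", I would argue by contradiction. Suppose no $\ro$ works. Then there are $x_n\in B$ and scalars $a_n$ with $|a_n|<1/n$ and $a_nx_n\notin V$. But $a_nx_n\to 0$ by that sequential characterization, which contradicts $a_nx_n\notin V$ for all $n$. Either route finishes the proof, and the balanced-neighborhood argument above is the one I would write down.
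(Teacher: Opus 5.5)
Your proof is correct and is essentially the paper's argument. Both absorb $B$ into $sW$ and then make the scalar $as$ small. You take $W\subset V$ balanced, while the paper gets $W$ and a scalar neighborhood $I$ directly from continuity of scalar multiplication at $(0,0)$, which amounts to the same step.
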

\begin{proof}
Since scalar multiplication $\K\times X\to X$ is continuous at $(0,0)$, there exist a neighborhood $I$ of $0$ in $\K$ and a $0$-neighborhood $W$ in $X$ such that
$$
b\in I,\ x\in W\Longrightarrow bx\in V.
$$
Choose $\ro_0>0$ such that $\{b\in\K:|b|<\ro_0\}\sbs I$. As $B$ is bounded, there exists $s>0$ such that $B\sbs sW$. Put $\ro:=\ro_0/s$. If $|a|<\ro$ and $x\in B$, write $x=sw$ with $w\in W$. Then $|as|<\ro_0$, hence $as\in I$, and therefore
$$
ax=(as)w\in V.
$$
This proves the lemma.
\end{proof}

\begin{Lem}\label{lem:omega-domination-app}
Let $\sigma=\sum_{j=1}^m \alpha_j\mu_{t_j}$ be a finite linear combination of members of the family $(\mu_t)_{t\in T}$. Set
$$
q:=\sup_{t\in T}\ovl{\mu_t}.
$$
Then there exists a nondecreasing function $\om:[0,\infty)\to[0,\infty)$ with $\om(r)\to 0$ as $r\downarrow 0$ such that
$$
\ovl\sigma(A)\le \om\bigl(q(A)\bigr)\qquad(A\in\SE).
$$
In particular, if $q$ is order continuous, then $\ovl\sigma$ is order continuous as well.
\end{Lem}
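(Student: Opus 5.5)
The plan is to define $\om$ explicitly, using the specific $F$-norm $|\cdot|_\mu$ on $L_0(\mu)$. This avoids any abstract modulus-of-continuity argument. I will use the semivariation of \cite[Sec.\ 10]{LD-trs3} in the form
$$
\ovl\nu(A)=\sup\{|\nu(B)|_\mu: B\in\SE,\ B\sbs A\}.
$$
Put $M:=\sum_{j=1}^m\max\{1,|\alpha_j|\}$ and take $\om(r):=M r$. This function is nondecreasing and tends to $0$ as $r\downarrow0$, so the only task is to prove $\ovl\sigma(A)\le M\,q(A)$.

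\textbf{Step 1 (scalar multiples).} First I check that $|\alpha f|_\mu\le \max\{1,|\alpha|\}\,|f|_\mu$ for every $f\in L_0(\mu)$ and every scalar $\alpha$. Take any $\ep>|f|_\mu$. Then $\mub(|f|\geqs\ep)\le\ep$. The set $\{|\alpha f|\geqs \max\{1,|\alpha|\}\ep\}$ is contained in $\{|f|\geqs\ep\}$, so its $\mub$-measure is at most $\ep\le\max\{1,|\alpha|\}\ep$. The definition of $|\cdot|_\mu$ then gives $|\alpha f|_\mu\le\max\{1,|\alpha|\}\ep$, and letting $\ep\downarrow|f|_\mu$ yields the claim. (If one prefers an argument valid in any $F$-space, the same role can be played by continuity of scalar multiplication, as in Lemma~\ref{lem:small-scalars-app}. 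In that case one obtains only some nondecreasing modulus $\om_j$ for each $\alpha_j$ and sums them.)

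\textbf{Step 2 (estimate).} Fix $A\in\SE$ and $B\in\SE$ with $B\sbs A$. For each $j$ we have $|\mu_{t_j}(B)|_\mu\le\ovl{\mu_{t_j}}(A)\le q(A)$. Using the triangle inequality for the $F$-norm together with Step 1,
$$
|\sigma(B)|_\mu\le\sum_{j=1}^m|\alpha_j\mu_{t_j}(B)|_\mu\le\sum_{j=1}^m\max\{1,|\alpha_j|\}\,q(A)=\om\bigl(q(A)\bigr).
$$
Taking the supremum over all such $B$ gives $\ovl\sigma(A)\le\om(q(A))$.

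\textbf{Step 3 (order continuity).} Suppose $q$ is order continuous, and let $A_n\downarrow\emptyset$ in $\SE$. Then $q(A_n)\to0$, so $\ovl\sigma(A_n)\le\om(q(A_n))\to0$. Hence $\ovl\sigma$ is order continuous.

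There is no real obstacle in this argument. The only point that needs care is Step 1: a priori the $F$-norm is not homogeneous, so one must confirm that multiplying by a fixed scalar distorts it by at most a controlled amount, uniformly in $f$. For $|\cdot|_\mu$ this is settled by the explicit inequality proved there.
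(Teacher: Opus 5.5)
Your proof is correct, but it gets the modulus by a different route from the paper.

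\textbf{The paper's route.} The paper never uses the particular form of $|\cdot|_\mu$. It takes the linear map $\Phi(x_1,\dots,x_m)=\sum_j\alpha_jx_j$ on $(L_0(\mu))^m$ and sets $\om(r):=\sup\{|\Phi(x)|_\mu: x\in K_r\}$ with $K_r=\{x:|x_j|_\mu\le r\ \forall j\}$. Then $\om(r)\to0$ is simply continuity of $\Phi$ at $0$. The estimate $\ovl\sigma(A)\le\om(q(A))$ follows exactly as in your Step~2.

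\textbf{Your route.} You prove the quasi-homogeneity $|\alpha f|_\mu\le\max\{1,|\alpha|\}\,|f|_\mu$ and combine it with the triangle inequality. This gives the explicit and stronger linear domination $\ovl\sigma\le M\,q$.

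\textbf{What your route avoids.} The paper justifies $\om(r)<\infty$ by asserting that $K_r$ is bounded in $(L_0(\mu))^m$. That is false in general: for Lebesgue measure on $[0,1]$, all multiples $n1_A$ with $0<\mu(A)\le r$ lie in the $r$-ball. The slip is harmless, since $|\cdot|_\mu\le\mu(S)$ anyway, but your explicit bound never needs it.

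\textbf{Your parenthetical remark.} It undersells the method. For an arbitrary $F$-norm, write $\alpha=n\beta$ with $n=\max\{1,\lceil|\alpha|\rceil\}$ and $|\beta|\le1$. Balancedness and the triangle inequality then give $\|\alpha x\|\le n\|x\|$. So a linear modulus $\om(r)=\bigl(\sum_j\max\{1,\lceil|\alpha_j|\rceil\}\bigr)r$ exists in every $F$-normed space, not only for $|\cdot|_\mu$.
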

\begin{proof}
Consider the continuous map $\Phi:(L_0(\mu))^m\to L_0(\mu)$ given by
$$
\Phi(x_1,\dots,x_m):=\sum_{j=1}^m \alpha_jx_j.
$$
For $r\ge 0$, let
$$
K_r:=\{(x_1,\dots,x_m)\in (L_0(\mu))^m: |x_j|_\mu\le r\text{ for all }j\}.
$$
Define
$$
\om(r):=\sup\{|\Phi(x)|_\mu:x\in K_r\}.
$$
Since $K_r$ is bounded in $(L_0(\mu))^m$ and $\Phi$ is continuous, the set $\Phi(K_r)$ is bounded in $L_0(\mu)$, hence $\om(r)<\infty$ for every $r$. Clearly $\om$ is nondecreasing. Moreover, continuity of $\Phi$ at $0$ implies $\om(r)\to 0$ as $r\downarrow 0$.

Fix $A\in\SE$ and $B\sbs A$. Then for each $j$,
$$
|\mu_{t_j}(B)|_\mu\le \ovl{\mu_{t_j}}(B)\le q(B)\le q(A).
$$
Hence $(\mu_{t_1}(B),\dots,\mu_{t_m}(B))\in K_{q(A)}$, and therefore
$$
|\sigma(B)|_\mu=\left|\sum_{j=1}^m \alpha_j\mu_{t_j}(B)\right|_\mu\le \om(q(A)).
$$
Taking the supremum over all measurable $B\sbs A$, and using again the collapse property of semivariation, we get
$$
\ovl\sigma(A)\le \om(q(A)).
$$
The last assertion follows immediately: if $A_n\downarrow\varnothing$ and $q(A_n)\to 0$, then
$$
\ovl\sigma(A_n)\le \om(q(A_n))\to 0.
$$
\end{proof}

\begin{Lem}\label{lem:bad-scalars}
Let $\nu_1,\nu_2:\SE\to L_0(\mu)$ be countably additive. Then there exists a countable set $C\sbs \K$ such that, for every $a\in \K\sm C$,
$$
\SN(\ovl{\nu_1+a\nu_2})=\SN(\ovl{\nu_1})\cap \SN(\ovl{\nu_2}).
$$
\end{Lem}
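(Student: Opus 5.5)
The inclusion $\SN(\ovl{\nu_1})\cap\SN(\ovl{\nu_2})\sbs \SN(\ovl{\nu_1+a\nu_2})$ holds for every scalar $a$. Indeed, if $A$ is null for both semivariations, then $\nu_1(B)=\nu_2(B)=0$ for all $B\sbs A$ by the collapse property recorded above, so $(\nu_1+a\nu_2)(B)=0$ for all such $B$. The whole difficulty is the reverse inclusion, and the plan is to show that the set $C$ of ``bad'' scalars is countable.

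Call $a$ bad if there is $A_a\in\SE$ with $\ovl{\nu_1+a\nu_2}(A_a)=0$ but $A_a\notin \SN(\ovl{\nu_1})\cap\SN(\ovl{\nu_2})$. Fix a bad $a$. On $A_a$ we have $\nu_1(B)=-a\nu_2(B)$ for all $B\sbs A_a$. Hence $\nu_2$ does not vanish identically on subsets of $A_a$: otherwise $\nu_1$ would vanish there too. Consequently $A_a\notin\SN(\ovl{\nu_2})$. The key observation is that for two distinct bad scalars $a\ne b$, the set $A_a\cap A_b$ is $\ovl{\nu_2}$-null. For $B\sbs A_a\cap A_b$ we get $-a\nu_2(B)=\nu_1(B)=-b\nu_2(B)$, so $(a-b)\nu_2(B)=0$ and $\nu_2(B)=0$. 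Thus the bad scalars index a family of sets in $\SE$, none $\ovl{\nu_2}$-null, which are pairwise disjoint modulo the $\si$-ideal $\SN(\ovl{\nu_2})$.

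It remains to show that such a family is countable. Here I will use that $\ovl{\nu_2}$ is an order continuous (indeed exhaustive) submeasure, since $\nu_2$ is countably additive with values in the $F$-space $L_0(\mu)$. The cleanest route is to invoke (T): $\nu_2$ has a control measure $\la$, a finite positive measure with $\SN(\la)\sbs\SN(\ovl{\nu_2})$. Replacing $\la$ by its restriction to a measurable kernel, I will ensure $\SN(\la)=\SN(\ovl{\nu_2})$. This can be done by taking a maximal (by measure exhaustion) $\ovl{\nu_2}$-null set $N_0$, which is a countable union since $\SN(\ovl{\nu_2})$ is a $\si$-ideal, and putting $\la':=\la(\cdot\sm N_0)$.

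Then the sets $A_a$ have $\la'(A_a)>0$ and are pairwise $\la'$-almost disjoint. For each $k$, at most $k\,\la'(E)$ of them can have $\la'(A_a)>1/k$, by a finite additivity count after removing the null pairwise intersections. So the bad scalars form a countable set $C$.

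Alternatively, I could avoid (T) and use exhaustivity of $\ovl{\nu_2}$ directly: an uncountable family would contain, for some $\ep>0$, a sequence with $\ovl{\nu_2}(A_{a_n})>\ep$. Disjointifying, $A_{a_n}\sm\bigcup_{i<n}A_{a_i}$ differs from $A_{a_n}$ by a null set, which contradicts exhaustivity. I expect this disjointification, and the check that removing $\ovl{\nu_2}$-null sets does not change the semivariation, which follows from subadditivity, to be the only delicate point.
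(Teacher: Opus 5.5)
Your proof is correct, and its skeleton matches the paper's: the trivial inclusion, witnessing sets for the ``bad'' scalars, the observation that witnesses for two distinct bad scalars meet in a null set, and a countability count. The differences are minor but real.

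The paper weighs the witnesses with $p=\ovl{\nu_1}+\ovl{\nu_2}$, so $p(E_a)>0$ is immediate from $E_a\notin\SI$, and it shows $E_a\cap E_b\in\SI$. You first note that $A_a\notin\SN(\ovl{\nu_2})$ (since $\nu_1=-a\nu_2$ on subsets of $A_a$), after which the single semivariation $\ovl{\nu_2}$ does all the work.

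For countability, your primary route uses Talagrand's (T) to obtain a control measure for $\nu_2$. You then correct it by exhaustion to a measure $\la'$ with $\SN(\la')=\SN(\ovl{\nu_2})$, and your argument for this is sound, using that $\SN(\ovl{\nu_2})$ is a $\si$-ideal. Finally you count by additivity. This is valid and not circular, since (T) is an external theorem, but it imports a deep result where the paper needs only exhaustivity.

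Your fallback route is precisely the paper's argument with $\ovl{\nu_2}$ in place of $p$: disjointify a sequence with $\ovl{\nu_2}(A_{a_n})>\ep$, discard the finitely many null pairwise intersections by subadditivity, and contradict exhaustivity. The one ingredient you merely assert is exhaustivity of $\ovl{\nu_2}$ on disjoint sequences. The paper proves this in a few lines from the exhaustivity of the countably additive $\nu_2$, and this elementary route is the one to prefer.
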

\begin{proof}
Set
$$
\SI:=\SN(\ovl{\nu_1})\cap \SN(\ovl{\nu_2}).
$$
We first note that for every scalar $a\in\K$ one always has
$$
\SI\sbs \SN(\ovl{\nu_1+a\nu_2}).
$$
Indeed, let $A\in\SI$ and let $B\sbs A$ be measurable. Since $A\in\SN(\ovl{\nu_i})$, the collapse property gives $\nu_i(B)=0$ for $i=1,2$. Hence
$$
(\nu_1+a\nu_2)(B)=\nu_1(B)+a\nu_2(B)=0,
$$
so again by the collapse property, $A\in\SN(\ovl{\nu_1+a\nu_2})$.

Thus only the reverse inclusion has to be proved outside a countable set of scalars.
Define
$$
p(A):=\ovl{\nu_1}(A)+\ovl{\nu_2}(A)\qquad(A\in\SE).
$$
Then $p$ is a submeasure. We claim that it is exhaustive on pairwise disjoint sequences. Let $(A_n)$ be pairwise disjoint. Since $\nu_1$ is countably additive, it is exhaustive, and therefore $\nu_1(B_n)\to 0$ for every choice of measurable $B_n\sbs A_n$. If $\ovl{\nu_1}(A_n)\not\to 0$, then for some $\ep>0$ and infinitely many $n$ there would exist $B_n\sbs A_n$ such that $|\nu_1(B_n)|_\mu\ge \ep$, contradicting exhaustivity of $\nu_1$. Hence $\ovl{\nu_1}(A_n)\to 0$. The same argument gives $\ovl{\nu_2}(A_n)\to 0$, and therefore
$$
p(A_n)=\ovl{\nu_1}(A_n)+\ovl{\nu_2}(A_n)\to 0.
$$

Call a scalar $a\in\K$ \emph{bad} if
$$
\SN(\ovl{\nu_1+a\nu_2})\not\sbs \SI.
$$
For each bad scalar $a$ choose a measurable set $E_a\in\SE$ such that
$$
E_a\in \SN(\ovl{\nu_1+a\nu_2})\sm \SI.
$$
Since $E_a\notin \SI$, at least one of $\ovl{\nu_1}(E_a)$ and $\ovl{\nu_2}(E_a)$ is strictly positive; equivalently,
$$
p(E_a)>0.
$$

Now let $a\ne b$ be bad and set $E:=E_a\cap E_b$. We claim that $E\in\SI$. To see this, let $B\sbs E$ be measurable. Since $E_a\in\SN(\ovl{\nu_1+a\nu_2})$ and $E_b\in\SN(\ovl{\nu_1+b\nu_2})$, we have
$$
\nu_1(B)+a\nu_2(B)=0,\qquad \nu_1(B)+b\nu_2(B)=0.
$$
Subtracting these equalities yields $(a-b)\nu_2(B)=0$, and because $a\ne b$, we obtain $\nu_2(B)=0$. Substituting back, we also get $\nu_1(B)=0$. Hence $E\in\SI$.

For $m\in\N$, put
$$
B_m:=\{a\in\K:a\text{ is bad and }p(E_a)\ge 1/m\}.
$$
Then the set $B$ of all bad scalars is equal to $\bigcup_{m=1}^{\infty}B_m$. We show that each $B_m$ is finite. Suppose, on the contrary, that some $B_m$ is infinite. Choose distinct scalars $a_1,a_2,\dots\in B_m$, and define
$$
F_1:=E_{a_1},\qquad F_n:=E_{a_n}\sm \bigcup_{j<n}E_{a_j}\qquad(n\ge 2).
$$
Then $(F_n)$ is a pairwise disjoint sequence, with $F_n\sbs E_{a_n}$ for all $n$. Moreover,
$$
E_{a_n}\cap \bigcup_{j<n}E_{a_j}=\bigcup_{j<n}(E_{a_n}\cap E_{a_j})
$$
involves only sets belonging to $\SI$, and on every set from $\SI$ both semivariations vanish. Hence
$$
p\left(E_{a_n}\cap \bigcup_{j<n}E_{a_j}\right)=0.
$$
By subadditivity of $p$,
$$
p(E_{a_n})\le p(F_n)+p\left(E_{a_n}\cap \bigcup_{j<n}E_{a_j}\right)=p(F_n).
$$
Therefore
$$
p(F_n)\ge p(E_{a_n})\ge 1/m\qquad(n\in\N),
$$
which contradicts the exhaustivity of $p$ on the disjoint sequence $(F_n)$. Thus each $B_m$ is finite, whence $B$ is countable.

If $a\notin B$, then by definition
$$
\SN(\ovl{\nu_1+a\nu_2})\sbs \SI.
$$
Combining this with the inclusion proved at the beginning, we obtain
$$
\SN(\ovl{\nu_1+a\nu_2})=\SI=\SN(\ovl{\nu_1})\cap \SN(\ovl{\nu_2}).
$$
This completes the proof.
\end{proof}

\begin{Lem}\label{lem:small-small}
Let $\eta$ and $q$ be order continuous submeasures on $\SE$, and let $A\in\SE$ satisfy $\eta(A)=0$. Assume that
$$
E\cap A=\varnothing\ \text{and}\ \eta(E)=0\quad\Longrightarrow\quad q(E)=0.
$$
Then, for every $\ep>0$, there exists $\dl>0$ such that
$$
E\cap A=\varnothing\ \text{and}\ \eta(E)\le \dl\quad\Longrightarrow\quad q(E)\le \ep.
$$
\end{Lem}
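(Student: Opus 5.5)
The plan is to argue by contradiction, using a Borel--Cantelli type construction with the $\limsup$ of a sequence of bad sets; the set $A$ only enters through the disjointness requirement. Suppose the conclusion fails for some $\ep>0$. Then for every $n\in\N$ (taking $\dl=2^{-n}$) there is $E_n\in\SE$ with
$$
E_n\cap A=\varnothing,\qquad \eta(E_n)\le 2^{-n},\qquad q(E_n)>\ep .
$$

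First I will record that an order continuous submeasure is countably subadditive. Let $\theta$ be such a submeasure and $G=\bigcup_n G_n$. Then
$$
\theta(G)\le \theta(G_1\cup\dots\cup G_N)+\theta\bigl(G\sm(G_1\cup\dots\cup G_N)\bigr).
$$
The second term tends to $0$ because $G\sm(G_1\cup\dots\cup G_N)\downarrow\varnothing$. Hence $\theta(G)\le\sum_n\theta(G_n)$.

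Next put
$$
F_k:=\bigcup_{n\ge k}E_n\qquad\text{and}\qquad F:=\bigcap_k F_k=\limsup_n E_n .
$$
All these sets lie in $\SE$ and are disjoint from $A$, since each $E_n$ is. By countable subadditivity, $\eta(F)\le\eta(F_k)\le\sum_{n\ge k}2^{-n}=2^{1-k}$ for every $k$, so $\eta(F)=0$. Since $F\cap A=\varnothing$, the hypothesis of the lemma gives $q(F)=0$.

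Finally, $F_k\sm F\downarrow\varnothing$, so order continuity of $q$ gives $q(F_k\sm F)\to0$. By subadditivity,
$$
q(F_k)\le q(F)+q(F_k\sm F)=q(F_k\sm F)\to 0 .
$$
On the other hand, monotonicity gives $q(F_k)\ge q(E_k)>\ep$ for all $k$, which is a contradiction.

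The only point needing care is the countable subadditivity of $\eta$, which is not part of the bare definition of a submeasure. It is exactly where order continuity of $\eta$ is used; order continuity of $q$ is used separately, in the last step.
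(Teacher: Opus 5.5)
Your proof is correct and follows the paper's argument essentially step for step: the same bad sets $E_n$ with $\eta(E_n)\le 2^{-n}$, the same tails $F_k=\bigcup_{n\ge k}E_n$ and their intersection $F$, the hypothesis giving $q(F)=0$, and order continuity of $q$ applied to $F_k\sm F\downarrow\varnothing$. Your explicit derivation of countable subadditivity of $\eta$ from its order continuity supplies a step the paper uses without comment in the estimate $\eta(F_n)\le\sum_{k\ge n}\eta(E_k)$. That estimate is where order continuity of $\eta$ is actually needed; the conclusion $\eta(F)=0$, which the paper attributes to order continuity, follows from monotonicity alone.
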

\begin{proof}
Fix $\ep>0$ and suppose, towards a contradiction, that no such $\dl$ exists. Then for every $n\in\N$ there is a set $E_n\in\SE$ such that
$$
E_n\cap A=\varnothing,\qquad \eta(E_n)\le 2^{-n},\qquad q(E_n)>\ep.
$$
Define
$$
F_n:=\bigcup_{k\ge n}E_k\qquad(n\in\N).
$$
Then $(F_n)$ is a decreasing sequence of measurable sets, each disjoint from $A$, and
$$
\eta(F_n)\le \sum_{k\ge n}\eta(E_k)\le \sum_{k\ge n}2^{-k}=2^{-n+1}.
$$
Hence $\eta(F_n)\to 0$. Put
$$
F:=\bigcap_{n=1}^{\infty}F_n.
$$
Since $\eta$ is order continuous, we get $\eta(F)=0$, and of course $F\cap A=\varnothing$. By the hypothesis of the lemma, this implies $q(F)=0$.

Now fix $n$. Because $E_n\sbs F_n=F\cup(F_n\sm F)$, subadditivity yields
$$
q(E_n)\le q(F)+q(F_n\sm F)=q(F_n\sm F).
$$
But $F_n\sm F\downarrow\varnothing$ as $n\to\infty$, so order continuity of $q$ gives
$$
q(F_n\sm F)\to 0.
$$
Consequently $q(E_n)\to 0$, contradicting the fact that $q(E_n)>\ep$ for every $n$. This contradiction proves the lemma.
\end{proof}

\begin{Th}\label{thm:drewnowski-L0}
Let $(\mu_t:t\in T)$ be a family of countably additive measures from $\SE$ to $L_0(\mu)$ such that the family of semivariations $(\ovl{\mu_t}:t\in T)$ is uniformly order continuous. Assume, in addition, that every countably additive $L_0(\mu)$-valued measure has bounded range. Then there exist a sequence $(t_n)\sbs T$ and scalars $(c_n)\in\ell_1$ such that the series
$$
\la:=\sum_{n=1}^{\infty} c_n\mu_{t_n}
$$
converges uniformly on $\SE$, defines a countably additive measure $\la:\SE\to L_0(\mu)$, and satisfies
$$
\SN(\ovl\la)=\SN\left(\sup_{t\in T}\ovl{\mu_t}\right).
$$
\end{Th}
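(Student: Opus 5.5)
Write $q:=\sup_{t\in T}\ovl{\mu_t}$. By uniform order continuity, $q$ is an order continuous submeasure, and $q\le\sup_t\ovl{\mu_t}$ with $\SN(q)=\bigcap_t\SN(\ovl{\mu_t})$.

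\textbf{Step 1: reduce to countably many measures.} Apply Proposition \ref{prop:countable-uoc} to the uniformly o.c.\ family $\{\ovl{\mu_t}:t\in T\}$. This gives a sequence $(t_n)\sbs T$ with $\SN(q)=\SN(\sup_n\ovl{\mu_{t_n}})=\bigcap_n\SN(\ovl{\mu_{t_n}})$. It therefore suffices to produce $(c_n)\in\ell_1$ with $\SN(\ovl\la)=\bigcap_n\SN(\ovl{\mu_{t_n}})$ for $\la=\sum_n c_n\mu_{t_n}$. One inclusion is automatic from the collapse property: if $\mu_{t_n}(B)=0$ for all $n$ and all $B\sbs A$, then $\la(B)=0$.

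\textbf{Step 2: convergence of the series.} Each $\mu_{t_n}(\SE)$ is bounded by hypothesis. So, by Lemma \ref{lem:small-scalars-app} applied to the balls $V_k=\{|x|_\mu\le 2^{-k}\}$, for each $n$ there is $\ro_n>0$ with $|a|<\ro_n\Rightarrow|a\mu_{t_n}(A)|_\mu\le 2^{-n}$ uniformly in $A$. Every $c_n$ we choose will satisfy $|c_n|<\min(\ro_n,2^{-n})$. Then $\sum_n c_n\mu_{t_n}$ converges uniformly on $\SE$, since $|\cdot|_\mu$ is subadditive and the tails are at most $2^{-N}$. A uniform limit of countably additive $L_0(\mu)$-valued measures is countably additive (Nikodým/uniform exhaustivity argument), so $\la$ is a measure.

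\textbf{Step 3: inductive choice of the $c_n$.} The scalars are chosen one at a time with Lemma \ref{lem:bad-scalars}. Put $\si_n=\sum_{j\le n}c_j\mu_{t_j}$. Given $\si_{n-1}$, which is countably additive with $\SN(\ovl{\si_{n-1}})=\bigcap_{j<n}\SN(\ovl{\mu_{t_j}})$, choose $c_n$ outside the countable bad set for the pair $(\si_{n-1},\mu_{t_n})$. This gives $\SN(\ovl{\si_n})=\bigcap_{j\le n}\SN(\ovl{\mu_{t_j}})$. The remaining freedom in $c_n$ (an uncountable interval minus a countable set) is used to make $c_n$ very small, relative to quantitative data from the earlier stages, which the next step requires.

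\textbf{Step 4: main obstacle — passing to the limit.} Null sets of the partial sums need not persist in the limit, since cancellation could occur: some $A\notin\bigcap_n\SN(\ovl{\mu_{t_n}})$ might still have $\ovl\la(A)=0$. The plan is as follows.

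Suppose $\ovl\la(A)=0$ and let $n$ be the first index with $\ovl{\mu_{t_n}}(A)>0$. Then $\ovl{\si_n}(A)>0$, so there is $B\sbs A$ with $|\si_n(B)|_\mu=:\ep>0$. Since $\la(B)=0$, we get $|\sum_{j>n}c_j\mu_{t_j}(B)|_\mu\ge\ep$.

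To rule this out, fix a countable dense-enough family of "witness" quantities at stage $n$, and require the tail coefficients to be small enough that the tail is below these quantities. The difficulty is that $\ep$ depends on $B$ and is not uniform over the uncountably many possible $A$.

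To get uniformity, use Lemma \ref{lem:small-small} together with Lemma \ref{lem:omega-domination-app} (which gives $\ovl{\si_n}\le\om_n\circ q$, so everything is o.c.). The aim is to pick at stage $n$ a threshold $\dl_n$ and a positive lower bound: every $B$ with $\eta$-size at least $\dl_n$ off the null part should satisfy $|\si_n(B)|_\mu\ge\ep_n$. Then choose $c_{n+1},c_{n+2},\dots$ so that the tail is uniformly smaller than $\ep_n/2$. If such a uniform bound proves unavailable directly, the fallback is a maximal-disjoint-family/exhaustion argument using order continuity of $q$. This would reduce to countably many sets $A_k$ carrying the positive $q$-mass, and the tail would be forced to be small on each of them in a diagonal way.

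I expect this limiting step to be the hard part of the proof.
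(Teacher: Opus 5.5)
\textbf{There is a genuine gap in Step 4.} Your Steps 1--3 match the paper's scheme. Step 4, however, carries the entire inclusion $\SN(\ovl\la)\sbs\SN(q)$, and you leave it as a plan; the uniform statement you aim for is also not the right one.

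\begin{itemize}
\item No bound $|\si_n(B)|_\mu\ge\ep_n$ can hold for all $B$ that are large in some sense, because $\si_n$ may cancel inside $B$. What can be bounded below is the semivariation $\ovl{\si_n}$; the collapse property then supplies a witness $F\sbs B$.
\item ``Off the null part'' has to mean off one fixed set. A set may be $\ovl{\si_n}$-null and still carry large $q$, since it is seen only by later $\mu_{t_k}$. So at stage $n$ no lower bound in terms of $q$ is possible until such sets are excised.
\item The comparison stage must be chosen quantitatively, not as the first $n$ with $\ovl{\mu_{t_n}}(A)>0$.
\item Your fallback, making the tail small on countably many fixed sets, does not control an arbitrary $E$ with $\ovl\la(E)=0$.
\end{itemize}

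\textbf{The missing construction, which is what the paper does.}
\begin{enumerate}
\item \emph{The sets $A_n$.} $q$ is o.c., hence exhaustive, and $\SN(\ovl{\si_n})$ is a $\si$-ideal. So a maximal disjoint family in $\SN(\ovl{\si_n})\sm\SN(q)$ is countable. Its union $A_n$ is $\ovl{\si_n}$-null, and every $\ovl{\si_n}$-null set disjoint from $A_n$ is $q$-null. This is the role of \cite[Th.~10.5]{LD-trs3} in the paper.
\item \emph{Monotonicity.} Since $\SN(\ovl{\si_{n+1}})\sbs\SN(\ovl{\si_n})$, the difference $A_{n+1}\sm A_n$ is $q$-null. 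So you may replace $A_{n+1}$ by $A_{n+1}\cap A_n$ and take the $A_n$ decreasing. Then $\bigcap_nA_n\in\bigcap_n\SN(\ovl{\mu_{t_n}})=\SN(q)$.
\item \emph{Thresholds.} $\ovl{\si_n}$ is o.c.\ by Lemma \ref{lem:omega-domination-app}. Lemma \ref{lem:small-small} with $\eta=\ovl{\si_n}$ and $A=A_n$ then gives $\dl_n>0$, which you may take nonincreasing in $n$, such that
$$
E\cap A_n=\varnothing,\ \ovl{\si_n}(E)\le\dl_n\Longrightarrow q(E)\le 2^{-n}.
$$
\item \emph{Tail control.} $\dl_n$ depends only on $\si_n$, so you can choose $c_{n+1}$ (outside the bad set, below $\ro_{n+1}$) with $\sup_{E\in\SE}|c_{n+1}\mu_{t_{n+1}}(E)|_\mu\le 2^{-n-2}\dl_n$. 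Then $\sup_E|\la(E)-\si_m(E)|_\mu\le 2^{-m-1}\dl_m<\dl_m/2$ for every $m$.
\item \emph{Closing argument.} Suppose $\ovl\la(E)=0$ but $q(E)>0$. Replace $E$ by $E\sm\bigcap_nA_n$; then $E\cap A_m\downarrow\varnothing$. Order continuity of $q$ gives some $m$ with $q(E\sm A_m)>2^{-m}$, hence $\ovl{\si_m}(E\sm A_m)>\dl_m$. So some $F\sbs E$ has $|\si_m(F)|_\mu>\dl_m$, and therefore $|\la(F)|_\mu>\dl_m/2>0$. This contradicts $\ovl\la(E)=0$.
\end{enumerate}

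The non-uniformity in $\ep$ that worried you disappears because $m$ is chosen depending on $E$, while the pairs $(2^{-m},\dl_m)$ are fixed in advance.
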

\begin{proof}
Set
$$
q:=\sup_{t\in T}\ovl{\mu_t}.
$$
By Proposition \ref{prop:countable-uoc}, there exists a countable subset $\{t_n:n\in\N\}\sbs T$ such that
$$
\SN(q)=\SN\Bigl(\sup_{n\in\N}\ovl{\mu_{t_n}}\Bigr).
$$
Replacing the original family by this countable subfamily, we may and shall assume from the outset that $T=\N$.

For $n\in\N$, put
$$
q_n:=\sup_{k\le n}\ovl{\mu_k}.
$$
By \cite[Th.~10.5]{LD-trs3}, applied to the increasing sequence $(q_n)$, there are finite sets $\tau_n\sbs \N$ and a decreasing sequence $(A_n)\sbs \SE$ such that
$$
q_{\tau_n}(A_n)=0,
$$
and
$$
E\cap A_n=\varnothing\ \text{and}\ q_{\tau_n}(E)=0\Longrightarrow q(E)=0,
$$
where
$$
q_{\tau_n}:=\sup_{k\in\tau_n}\ovl{\mu_k},
$$
and, moreover,
$$
q\left(\bigcap_{n=1}^{\infty}A_n\right)=0.
$$

We now construct inductively finitely supported linear combinations $\ga_n=\sum_{k\le n}a_{n,k}\mu_k$ and positive numbers $\dl_n$ such that for every $n\in\N$,
\begin{align*}
&\SN(\ovl{\ga_n})=\bigcap_{k\le n}\SN(\ovl{\mu_k}),\\
&\sup_{E\in\SE}|b_n\mu_{n+1}(E)|_\mu\le 2^{-n-2}\dl_n,\\
&E\cap A_n=\varnothing\ \text{and}\ \ovl{\ga_n}(E)\le \dl_n\Longrightarrow q(E)\le 2^{-n}.
\end{align*}

For $n=1$, set $\ga_1:=\mu_1$. Since $\ga_1$ is a finite linear combination of members of the family, Lemma \ref{lem:omega-domination-app} shows that $\ovl{\ga_1}$ is order continuous. Also,
$$
\SN(\ovl{\ga_1})=\SN(\ovl{\mu_1}).
$$
Because $E\cap A_1=\varnothing$ and $\ovl{\ga_1}(E)=0$ imply $q_{\tau_1}(E)=0$ after a harmless renumbering of the countable family at the start of the proof, the implication furnished by \cite[Th.~10.5]{LD-trs3} yields
$$
E\cap A_1=\varnothing\ \text{and}\ \ovl{\ga_1}(E)=0\Longrightarrow q(E)=0.
$$
Applying Lemma \ref{lem:small-small} with $\eta=\ovl{\ga_1}$ and $A=A_1$, we obtain $\dl_1>0$ such that
$$
E\cap A_1=\varnothing\ \text{and}\ \ovl{\ga_1}(E)\le \dl_1\Longrightarrow q(E)\le 2^{-1}.
$$

Assume now that $\ga_n$ and $\dl_n$ have been constructed. Since $\mu_{n+1}$ is countably additive and $L_0(\mu)$-valued, its range is bounded. Applying Lemma \ref{lem:small-scalars-app} in the topological vector space $L_0(\mu)$ to the bounded set $\mu_{n+1}(\SE)$ and to the neighborhood
$$
V:=\{x\in L_0(\mu):|x|_\mu<2^{-n-2}\dl_n\},
$$
we obtain $\ro_n>0$ such that
$$
|b|<\ro_n\Longrightarrow \sup_{E\in\SE}|b\mu_{n+1}(E)|_\mu\le 2^{-n-2}\dl_n.
$$
Let $C_n$ be the countable exceptional set furnished by Lemma \ref{lem:bad-scalars} for the pair $(\ga_n,\mu_{n+1})$. Choose $b_n\in\K$ with
$$
0<|b_n|<\ro_n,\qquad b_n\notin C_n,
$$
and set
$$
\ga_{n+1}:=\ga_n+b_n\mu_{n+1}.
$$
Then, by Lemma \ref{lem:bad-scalars},
$$
\SN(\ovl{\ga_{n+1}})=\SN(\ovl{\ga_n})\cap \SN(\ovl{\mu_{n+1}})=\bigcap_{k\le n+1}\SN(\ovl{\mu_k}).
$$
Moreover, $\ga_{n+1}$ is again a finite linear combination of members of the family, so Lemma \ref{lem:omega-domination-app} implies that $\ovl{\ga_{n+1}}$ is order continuous.

We now verify the zero-zero implication needed for Lemma \ref{lem:small-small}. Let $E\in\SE$ satisfy
$$
E\cap A_{n+1}=\varnothing,\qquad \ovl{\ga_{n+1}}(E)=0.
$$
Then $E\in\SN(\ovl{\ga_{n+1}})$, hence, by the identity of null ideals just obtained,
$$
E\in\bigcap_{k\le n+1}\SN(\ovl{\mu_k}).
$$
Therefore $q_{\tau_{n+1}}(E)=0$, because every $k\in\tau_{n+1}$ is one of the first finitely many indices under consideration. Since also $E\cap A_{n+1}=\varnothing$, the property of $A_{n+1}$ obtained from \cite[Th.~10.5]{LD-trs3} gives $q(E)=0$.

Applying Lemma \ref{lem:small-small} to $\eta=\ovl{\ga_{n+1}}$, $A=A_{n+1}$ and $q$, we obtain $\dl_{n+1}>0$ such that
$$
E\cap A_{n+1}=\varnothing\ \text{and}\ \ovl{\ga_{n+1}}(E)\le \dl_{n+1}\Longrightarrow q(E)\le 2^{-n-1}.
$$
This completes the inductive construction.

Next we define
$$
\la:=\sum_{n=1}^{\infty} c_n\mu_n,
$$
where $c_1:=1$, and for $n\ge 2$, $c_n$ is the coefficient of $\mu_n$ appearing when the recursive definition of the $\ga_k$'s is unwound. Equivalently, one may write
$$
\ga_{n+1}=\mu_1+\sum_{k=1}^{n}b_k\mu_{k+1},
$$
so that $c_{n+1}=b_n$ for $n\ge 1$.

For $m<n$ and $E\in\SE$ we have
$$
\left|\sum_{k=m+1}^{n}c_k\mu_k(E)\right|_\mu\le \sum_{k=m+1}^{n}\sup_{F\in\SE}|c_k\mu_k(F)|_\mu.
$$
By the choice of the coefficients,
$$
\sup_{F\in\SE}|c_{k+1}\mu_{k+1}(F)|_\mu\le 2^{-k-2}\dl_k,
$$
so the series of suprema converges. Therefore the series defining $\la$ converges uniformly on $\SE$. Since $L_0(\mu)$ is complete, $\la$ is well defined; and because it is the uniform limit of countably additive measures, it is itself countably additive (see the related Nikodym convergence results for F-space valued measures in \cite{LD-trs3}).

We now prove the equality of null ideals. Suppose first that $q(E)=0$. Then $\ovl{\mu_n}(E)=0$ for all $n$, so every finite linear combination of the $\mu_n$ vanishes on every measurable subset of $E$. Passing to the uniform limit, we obtain $\la(B)=0$ for every measurable $B\sbs E$, and hence $\ovl\la(E)=0$. Thus
$$
\SN(q)\sbs \SN(\ovl\la).
$$

Conversely, assume that $\ovl\la(E)=0$. Put
$$
A:=\bigcap_{n=1}^{\infty}A_n.
$$
Since $q(A)=0$, replacing $E$ by $E\sm A$ if necessary, we may suppose that $E\cap A=\varnothing$. Because $A_n\downarrow A$, there exists $m$ such that
$$
q(E\sm A_m)>2^{-m}.
$$
Indeed, otherwise $q(E\sm A_m)\le 2^{-m}$ for every $m$, and by passing to the limit we would get $q(E)=0$.

By the defining property of $\dl_m$, the inequality above implies
$$
\ovl{\ga_m}(E\sm A_m)>\dl_m.
$$
Hence, by the collapse property of semivariation, there exists $F\sbs E\sm A_m$ such that
$$
|\ga_m(F)|_\mu>\dl_m.
$$
On the other hand,
$$
\left|\la(F)-\ga_m(F)\right|_\mu\le \sum_{n>m}\sup_{G\in\SE}|c_n\mu_n(G)|_\mu.
$$
The tail on the right can be made $<\dl_m$ by uniform convergence of the defining series. Therefore $\la(F)\ne 0$. Since $F\sbs E$, this contradicts the assumption $\ovl\la(E)=0$. We conclude that $q(E)=0$.

Thus
$$
\SN(\ovl\la)=\SN(q)=\SN\left(\sup_{t\in T}\ovl{\mu_t}\right),
$$
as claimed.
\end{proof}

\end{document}